\documentclass[12pt]{amsart}
\usepackage{amssymb,latexsym}
\usepackage{graphicx}
\newtheorem{theorem}{Theorem}
\newtheorem{lemma}[theorem]{Lemma}

\def\R{{\mathbb R}}

\begin{document}
\title[Regularity for matrices and sparse graphs]{Szemer\'edi's Regularity Lemma for matrices and sparse graphs}
\author{Alexander Scott}

\address{Mathematical Institute\\
    24-29 St Giles'\\
    Oxford, OX1 3LB\\
    UK;
    email: scott@maths.ox.ac.uk}

\date{\today}

\begin{abstract}
Szemer\'edi's Regularity Lemma is an important tool for analyzing the structure of dense graphs.  There are versions of 
the Regularity Lemma for sparse graphs, but these only apply when the graph satisfies some local density condition.  
In this paper, we prove a sparse Regularity Lemma that holds for all graphs.  More generally, we give a Regularity Lemma that holds for arbitrary real matrices.
\end{abstract}

\maketitle

\section{Introduction}\label{S:intro}

\subsection{Background}

Let $X$ and $Y$ be disjoint sets of vertices in $G$.  We say that the pair $(X,Y)$ is {\em $\epsilon$-regular} if, for every $X'\subset X$ and $Y'\subset Y$ with $|X'|\ge\epsilon|X|$ and $|Y'|\ge\epsilon|Y|$, we have
\begin{equation}\label{regd}
|d(X',Y')-d(X,Y)|<\epsilon,
\end{equation} 
where $d(X,Y)$ is the density between $X$ and $Y$ (for notation, see Section \ref{notation}).
Note that $\epsilon$ plays two roles here, bounding both the size of the subsets $X'$ and $Y'$ and the difference in density.  

The aim of Szemer\'edi's Regularity Lemma is to break up a graph into pieces such that the bipartite
graphs between different pieces are mostly `well-behaved' (i.e.~$\epsilon$-regular).
We shall therefore consider various partitions $V_0\cup\cdots\cup V_k$ of $V(G)$, 
often with a specified vertex class $V_0$, which we shall refer to as the {\em exceptional set}
(we will always use the subscript 0 for the exceptional set). 
A partition $V(G)=V_0\cup\cdots\cup V_k$ with exceptional set $V_0$ is {\em balanced} if $|V_i|=|V_j|$ for all $i,j\ge 1$.
We say that a partition $V(G)=V_0\cup\cdots\cup V_k$ with exceptional set $V_0$ is {\em $\epsilon$-regular} if it is balanced, $|V_0|<\epsilon|G|$ and all but at most $\epsilon k^2$ pairs $(V_i,V_j)$ with $i>j\ge1$ are $\epsilon$-regular (we will often suppress explicit mention of the exceptional set).  For a partition $\mathcal P$ with an exceptional set, we write $|\mathcal P|$ for the number of nonexceptional classes (so we ignore the exceptional set).

Szemer\'edi's Regularity Lemma \cite{S} then says the following.

\begin{theorem}[Szemer\'edi's Regularity Lemma]
For every $\epsilon>0$ and every integer $m\ge1$ there is an integer $M$ such that every graph $G$ with $|G|\ge M$ has an $\epsilon$-regular partition $\mathcal P$ with $|\mathcal P|\in[m,M]$.
\end{theorem}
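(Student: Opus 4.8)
The plan is to run the standard \emph{energy-increment} argument, based on the mean square density. For a partition $\mathcal{P}=\{V_1,\dots,V_k\}$ of $V(G)$ (the exceptional set is handled separately below), define its \emph{index}
\[
  q(\mathcal{P})=\frac1{|G|^2}\sum_{i=1}^k\sum_{j=1}^k |V_i|\,|V_j|\,d(V_i,V_j)^2,
\]
so $0\le q(\mathcal{P})\le 1$ always. Two elementary estimates drive the proof. First, \emph{refinement cannot decrease the index}: if $\mathcal{Q}$ refines $\mathcal{P}$ then $q(\mathcal{Q})\ge q(\mathcal{P})$. This is a defect form of the Cauchy--Schwarz inequality: each $d(V_i,V_j)$ is the average of the densities $d(W,W')$ over the parts $W\subseteq V_i$ and $W'\subseteq V_j$ of $\mathcal{Q}$, weighted by $|W|\,|W'|$, and $x\mapsto x^2$ is convex. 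Second, \emph{an irregular pair forces a definite increment}: if $(V_i,V_j)$ is not $\epsilon$-regular, with witnessing sets $X\subseteq V_i$ and $Y\subseteq V_j$, then splitting $V_i$ into $\{X,V_i\setminus X\}$ and $V_j$ into $\{Y,V_j\setminus Y\}$ raises the contribution of this pair to $q$ by at least $\epsilon^4\,|V_i|\,|V_j|/|G|^2$. Hence, if a balanced partition $\mathcal{P}$ with $k$ nonexceptional classes and small exceptional set is not $\epsilon$-regular, taking the common refinement of one such binary cut for each of the more than $\epsilon k^2$ irregular pairs yields a refinement $\mathcal{Q}$ with $q(\mathcal{Q})\ge q(\mathcal{P})+\epsilon^5/2$ (each irregular pair has weight roughly $|G|^2/k^2$, and the per-pair gains add up over pairs).

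With these estimates in hand the proof is an iteration. Fix $\epsilon>0$ and $m$; since an $\epsilon$-regular partition into more than $m$ classes still meets the requirement, we may assume $m$ is large in terms of $\epsilon$, say $m\ge 2+\log_2(1/\epsilon)$. Start from a balanced partition $\mathcal{P}_0$ with $m$ nonexceptional classes and $|V_0|<m$. Given $\mathcal{P}_t$: if it is $\epsilon$-regular, stop and output it; otherwise refine it as above, then re-cut each class into blocks of a common size to restore balance, sweeping the leftover vertices of each class into $V_0$, and call the result $\mathcal{P}_{t+1}$. Since $q$ is nondecreasing along refinements, jumps by at least $\epsilon^5/2$ at each non-terminal step, and never exceeds $1$, the process halts after at most $2\epsilon^{-5}$ steps. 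One step multiplies the number of classes by at most $4^{k_t}$ (each class is cut at most once per irregular pair containing it, then subdivided for balance), so starting from $k_0=m$ the class count is bounded throughout by a number $M=M(\epsilon,m)$ that does not depend on $G$; in particular the output partition has between $m$ and $M$ classes, since refinement only increases the class count.

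What remains, and is the only genuinely delicate point, is to control the exceptional set while keeping the output balanced. At the balancing step of round $t+1$ one takes the common block size to be about $|G|/(k_t4^{k_t})$, so that the at most $2^{k_t}$ atoms inside each class leave a remainder of fewer than $|G|/(k_t2^{k_t})$ vertices; summed over the $k_t$ classes, $|V_0|$ grows by less than $|G|/2^{k_t}$ in round $t+1$. Because each non-terminal round strictly increases the class count we have $k_t\ge m+t$, so over the whole process $|V_0|$ grows by at most $\sum_{t\ge 0}|G|/2^{m+t}=|G|/2^{m-1}\le\tfrac12\epsilon|G|$ by the choice of $m$; combined with $|V_0|<m<\tfrac12\epsilon|G|$ initially (valid once $|G|>2m/\epsilon$), this keeps $|V_0|<\epsilon|G|$ throughout. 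Choosing $M$ larger than $2m/\epsilon$ as well as larger than the class-count bound finishes the proof. The energy estimates of the first paragraph are short and robust; the work is in marrying them to the balancedness and exceptional-set constraints without letting the number of classes depend on $G$.
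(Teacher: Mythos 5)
Your strategy is the classical energy-increment argument, which is in fact the paper's own argument specialized to this setting (the final remarks note that taking $D=\infty$ in the matrix proof recovers exactly this proof), so the approach is the right one. However, there is a concrete gap at the point you yourself flag as delicate: you define the index $q$ only over the nonexceptional classes and then invoke ``$q$ is nondecreasing along refinements'' across the rebalancing step in which leftover vertices are swept into $V_0$. For your $q$, that step is not a refinement to which your Cauchy--Schwarz monotonicity applies: vertices moved into $V_0$ simply disappear from the sum, and $q$ can strictly decrease (for a clique with the trivial one-class partition, sweeping half the vertices into $V_0$ drops $q$ from $1$ to $1/4$). Consequently the bookkeeping ``$q$ jumps by at least $\epsilon^5/2$ at each non-terminal step and never exceeds $1$'' is not justified as written: the gain is established for the unbalanced common refinement, but nothing in your argument rules out the subsequent sweep eating part or all of it. Your hypothesis $m\ge 2+\log_2(1/\epsilon)$ controls only the \emph{size} of $V_0$, not the index loss caused by enlarging it.

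The standard repair is exactly the device the paper uses: define the index of a partition with exceptional set by splitting $V_0$ into singletons, as in \eqref{phiexcept} (and note how Lemma \ref{mainl} carries the singleton partition $\mathcal P_0$ of $V_0$ through the computation). With that convention, moving leftover vertices into the exceptional set refines those atoms into singletons, so monotonicity genuinely applies and the full increment survives rebalancing; the boundedness $q\le 1$ is unaffected. Alternatively, you can keep your definition and estimate the loss directly: removing a set of $s$ vertices from the classes lowers your $q$ by at most about $4s/n$ (each lost edge affects the relevant squared densities by a controlled amount), hence by $O(2^{-k_t})$ per round; this is below half the per-round gain once every $k_t$ exceeds roughly $5\log_2(1/\epsilon)$ plus a constant, but then you must strengthen your lower bound on $m$ accordingly rather than use $m\ge 2+\log_2(1/\epsilon)$. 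Either patch closes the argument; without one of them the iteration count bound does not follow.
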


Szemer\'edi's Regularity Lemma is an extremely important tool for analysing the structure of dense graphs.
However, for sparse graphs, it is much less helpful.
Indeed, if the graph does not contain a large set of vertices that induces a reasonably dense subgraph 
then {\em every} balanced partition (into not too many classes) is $\epsilon$-regular.  
It is therefore desirable to have a version of Szemer\'edi's Regularity Lemma that carries some form of structural information even
for graphs with very few edges
(for further background and discussion on regularity and sparse graphs see
Gerke and Steger \cite{GS} and Bollob\'as and Riordan \cite{BR}).

In order to handle sparse graphs, it is natural to modify the notion of regularity to take the density of the graph into account.
We say that a pair $(X,Y)$ is {\em $(\epsilon,p)$-regular} if, for every $X'\subset X$ and $Y'\subset Y$ with $|X'|\ge\epsilon|X|$ and $|Y'|\ge\epsilon|Y|$, we have
\begin{equation}\label{sregd}
|d(X',Y')-d(X,Y)|<\epsilon p.
\end{equation}
We say that $(X,Y)$ is {\em $(\epsilon)$-regular} if it is $(\epsilon,d)$-regular, where $d$ is the density of $G$.  A partition $V_0\cup\cdots\cup V_k$ with exceptional set $V_0$ is {\em $(\epsilon)$-regular} if it is balanced, $|V_0|<\epsilon|G|$, and all but at most $\epsilon k^2$ pairs $(V_i,V_j)$ with $i>j\ge1$ are $(\epsilon,d)$-regular.

Using this notion of regularity, 
Kohayakawa and R\"odl (see \cite{K,KR,GS})
proved a Sparse Regularity Lemma for a large class of sparse graphs, namely those that do not have large dense parts.
More precisely, we say that a graph with density $d$ is {\em $(\eta,D)$-upper-uniform}
if, for all disjoint $X, Y \subset V$ with $\min\{|X|,|Y|\}\ge \eta|G|$, we have 
$d(X,Y) \le Dd$.  

\begin{theorem}\label{KRthm}\ \!\cite{KR}
For every $\epsilon,D>0$ and every integer $m\ge1$ there are $\eta>0$ and an integer $M$ such that every 
$(\eta,D)$-upper uniform graph $G$ has an $(\epsilon)$-regular partition $\mathcal P$ with $|\mathcal P|\in[m,M]$. 
\end{theorem}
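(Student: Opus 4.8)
The plan is to run the energy-increment argument behind Szemer\'edi's original proof, but with a \emph{scaled} mean-square-density index. Write $d$ for the density of $G$ (if $d=0$ the statement is vacuous, so assume $d>0$), and for a partition $\mathcal P=\{V_0,V_1,\dots,V_k\}$ with exceptional set $V_0$ put
\[
q(\mathcal P)=\frac{1}{|G|^2}\sum_{1\le i\ne j\le k}|V_i|\,|V_j|\left(\frac{d(V_i,V_j)}{d}\right)^{2}.
\]
Two elementary facts drive everything. First, applying the Cauchy--Schwarz inequality pair by pair shows that $q(\mathcal P)$ never decreases when $\mathcal P$ is refined. Second --- and this is where the hypothesis enters --- $q$ is bounded whenever the classes are not too small: if $|V_i|\ge\eta|G|$ for all $i\ge1$, then $(\eta,D)$-upper-uniformity gives $d(V_i,V_j)\le Dd$ for every pair, hence $q(\mathcal P)\le D^2$.

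Next I would prove the key increment step, the standard defect form of Cauchy--Schwarz adapted to the scaled index. Suppose $\mathcal P$ is balanced with $|V_0|<\epsilon|G|$ but is not $(\epsilon)$-regular, so that more than $\epsilon k^2$ pairs $(V_i,V_j)$ fail to be $(\epsilon,d)$-regular; for each such pair fix witnesses $X_{ij}\subset V_i$ and $Y_{ij}\subset V_j$ with $|X_{ij}|\ge\epsilon|V_i|$, $|Y_{ij}|\ge\epsilon|V_j|$ and $|d(X_{ij},Y_{ij})-d(V_i,V_j)|\ge\epsilon d$. Refining each $V_i$ by all the cut sets it receives yields a refinement $\mathcal P'$ with $|\mathcal P'|\le|\mathcal P|\cdot 4^{|\mathcal P|}$, and since cutting along $X_{ij}$ and $Y_{ij}$ alone already forces a defect of at least $\frac{|V_i||V_j|}{|G|^2}\cdot\epsilon^2\cdot\epsilon^2$ in the contribution of the pair $(V_i,V_j)$, and the contributions of distinct pairs occupy disjoint blocks of the sum defining $q(\mathcal P')$, balancedness gives
\[
q(\mathcal P')\ \ge\ q(\mathcal P)+\epsilon k^{2}\cdot\frac{1}{4k^{2}}\cdot\epsilon^{4}\ =\ q(\mathcal P)+\frac{\epsilon^{5}}{4}.
\]

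Now iterate, interleaving refinements with clean-up. Start from any balanced partition into $m$ equal classes, with the fewer than $m$ leftover vertices placed in $V_0$ (harmless once $|G|$ is large). Repeatedly apply the increment step; after each refinement, re-balance by chopping every class into blocks of a fixed, suitably small common size and moving the remainders into $V_0$. Since $q$ lies in $[0,D^2]$ and increases by at least $\epsilon^{5}/4$ at each step, the process stops after at most $T:=\lceil 4D^2/\epsilon^5\rceil$ steps, at a partition that is $(\epsilon)$-regular. The number of classes is bounded by a $T$-fold iterated exponential of $m$, which we take to be $M=M(\epsilon,D,m)$; taking $|G|\ge M$ and choosing the block sizes to decrease quickly enough, a routine accounting keeps $|V_0|<\epsilon|G|$ throughout and keeps every nonexceptional class of size at least $|G|/(2M)$.

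The main obstacle --- and the only place the argument is genuinely more delicate than the dense case --- is the apparent circularity in the previous paragraph: the bound $q\le D^2$ that makes the iteration terminate needs all classes to have size at least $\eta|G|$, yet refinement shrinks classes and $\eta$ must be pinned down before the iteration is run. The resolution dictates the order of quantifiers: the bound $T$ on the number of steps, and hence $M$, depends only on $\epsilon$ and $D$ (through the increment $\epsilon^5/4$ and the a priori ceiling $D^2$), not on $\eta$. So one fixes $M$ first and only then sets $\eta:=1/(2M)$; this guarantees that every class produced during the iteration has size at least $|G|/(2M)=\eta|G|$, which retroactively legitimises the bound $q\le D^2$. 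Making the exceptional-set bookkeeping consistent with this ordering is the one piece of real care required; the rest is a cosmetic adaptation of Szemer\'edi's proof.
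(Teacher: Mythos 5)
Your proposal is essentially correct, and it is the classical Kohayakawa--R\"odl argument; note, though, that the paper never proves Theorem~\ref{KRthm} at all --- it is quoted from \cite{KR} --- and the paper's own machinery takes a genuinely different route in order to prove the stronger Theorems~\ref{srlm}--\ref{srlsg}. You use the rescaled quadratic index $q$, which is bounded by $D^2$ thanks to upper-uniformity \emph{provided} every class has size at least $\eta|G|$, and you correctly isolate and resolve the one delicate point, the order of quantifiers: the number of rounds and hence $M$ depend only on $\epsilon,D,m$, and $\eta:=1/(2M)$ is chosen afterwards, which legitimises the ceiling $q\le D^2$ for every partition the iteration actually visits. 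The paper instead truncates rather than rescales: it works with $\phi_{\epsilon,D}$, quadratic on $[-2D,2D]$ and linear beyond, so that $\phi(\mathcal P,\mathcal Q)\le 4D||A||$ (Lemma~\ref{phiupperbound}) holds with no lower bound on class sizes and hence with no upper-uniformity hypothesis at all; the price is the separate treatment of irregular blocks of huge density (the $|d(V_i,W_j)|\ge\epsilon D$ case in Lemma~\ref{mainl}), which in your setting is ruled out at partition scale by upper-uniformity. So your argument buys the stated theorem with standard tools but genuinely needs the hypothesis, while the paper's truncation is exactly what lets it discard $\eta$ and $D$ and obtain Theorem~\ref{srlsg}, which supersedes Theorem~\ref{KRthm}.

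Two pieces of bookkeeping that you wave at do need the standard fixes. Because your $q$ omits the exceptional class, ``monotone under refinement'' is not literally true at the re-balancing step, where remainders are moved into $V_0$ and their contribution is simply dropped; the clean repair, which the paper adopts via $\widetilde{\mathcal P}$ in \eqref{phiexcept}, is to count exceptional vertices as singleton classes, so that chopping and discarding is a genuine refinement and none of the increment is lost. Second, to keep $|V_0|<\epsilon|G|$ over all $\lceil 4D^2/\epsilon^5\rceil$ rounds you should either start with $\max\{m,\lceil\log_2(1/\epsilon)\rceil+2\}$ classes (as in the paper's proof of Theorem~\ref{srlm}) or let the chop size at each round depend on the total number of rounds; and your argument implicitly assumes $|G|\ge M$, which is how the statement is meant to be read.
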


Random graphs typically satisfy an upper uniformity condition, which has meant that this result has been useful in practice.
However, it is natural to wonder whether the upper uniformity condition is necessary, or whether some simpler condition could replace it
(see for instance Gerke and Steger \cite{GS} and Koml\'os and Simonovits \cite{KS96} for discussion).
The aim of this paper is to prove that, perhaps surprisingly, no condition at all is necessary:
for every $\epsilon>0$ and $m\ge1$ there is an integer $M$ such that {\em every} sufficiently large graph has an $(\epsilon)$-regular partition
into $k$ parts, for some $k\in [m,M]$.  Furthermore, similar results hold for arbitrary weighted graphs and for real matrices.

\subsection{Results}
Let us note first that $(\epsilon)$-regularity can be considered as a `rescaled' version of $\epsilon$-regularity.
Consider $G$ as an edge-weighting $w_G$ of $K_n$, where $w(xy)=1$ for edges $xy\in E(G)$ and $w(xy)=0$ for nonedges.  Now, multiplying all weights by $\binom n2 /e(G)$, 
we get an edge-weighting $w'$ with average edge-weight 1 (note that the weights can be arbitrarily large, even if $w$ is bounded).  
An $(\epsilon)$-regular partition of $G$ with edge-weighting $w$ corresponds to an $\epsilon$-regular partition of $G$ with edge-weighting $w'$.

In proving our results, it will be both more general and more convenient to work in terms of matrices rather than graphs.  
Let $A=(a_{ij})$ be a matrix (not necessarily square), with rows indexed by $V$ and columns indexed by $W$.  We write $$||A||=\sum_{i\in V, j\in W}|a_{ij}|.$$
For $X\subset V$ and $Y\subset W$, we write
$$w_A(X,Y)=\sum_{v\in X,w\in Y}a_{vw}$$
and say that the {\em density} of the submatrix $A_{X,Y}$ (with rows $X$ and columns $Y$) is
$$d_A(X,Y)=\frac{w_A(X,Y)}{|X| |Y|}.$$
We say that a submatrix $A_{X,Y}$ is {\em $\epsilon$-regular} if for all $X'\subset X$ and $Y'\subset Y$ with $|X'|\ge\epsilon|X|$ and $|Y'|\ge\epsilon|Y|$, we have
$$|d_A(X',Y')-d_A(X,Y)|\le\epsilon.$$

A {\em block partition} $(\mathcal P,\mathcal Q)$ of $A$ is a partition $\mathcal P$ of $V$ together with a partition $\mathcal Q$ of $W$; 
the {\em blocks} $(X,Y)$ of the partition are the submatrices $A_{X,Y}$ for $X\in\mathcal P$ and $Y\in\mathcal Q$.    
If $V=W$ (i.e.~rows and columns are indexed by the same set),
we say that the block partition is {\em symmetric} if $\mathcal P=\mathcal Q$.
We will also want to allow exceptional sets, which for block partitions are given by specifying a class $V_0$ of $\mathcal P$ and a class $W_0$ of $\mathcal Q$.  
We say that the block partition $(\mathcal P,\mathcal Q)$ has {\em exceptional sets $(V_0,W_0)$}, and refer to the blocks $\{(V_0,Y):Y\in\mathcal Q\}\cup\{(X,W_0):X\in\mathcal P\}$ as {\em exceptional blocks}.
The block partition $(\mathcal P,\mathcal Q)$ with exceptional sets $(V_0,W_0)$ is {\em symmetric} if $\mathcal P=\mathcal Q$ and $V_0=W_0$.

We say that a block partition $(\mathcal P,\mathcal Q)$ with exceptional sets $(V_0,W_0)$ is {\em balanced} if the partitions $\mathcal P$ (with exceptional set $V_0$) and $\mathcal Q$ (with exceptional set $W_0$) are balanced.
A block partition $(\mathcal P,\mathcal Q)$ with exceptional sets $(V_0,W_0)$ is {\em $\epsilon$-regular} if $\mathcal P$  and $\mathcal Q$ (with exceptional sets $V_0$, $W_0$ respectively) are balanced, $|V_0|<\epsilon|V|$, $|W_0|<\epsilon|W|$, and all but at most $\epsilon|\mathcal P| |\mathcal Q|$ of the nonexceptional blocks are $\epsilon$-regular.  $(\mathcal P,\mathcal Q)$ is {\em $(\epsilon)$-regular} if it is an $\epsilon$-regular partition of the normalized matrix 
$$A^*=\frac{|V| |W|}{||A||} A$$ 
in which the average modulus of entries is 1.

It is not difficult to prove a version of the Regularity Lemma for matrices $A$ such that all entries of $A^*$ are $O(1)$ (or more generally, 
following Theorem \ref{KRthm}, for matrices satisying a suitable local density condition).
The difficulty  comes when the entries are not uniformly bounded.  Note that, for a graph $G$ with density $d$ and adjacency matrix $A$, the normalized matrix $A^*$ has entries with maximum value $\Theta(1/d)$; so for sparse graphs, the entries of $A^*$ can become arbitrarily large (for instance, consider a graph with $n$ vertices and $n\log n$ edges).

Our aim here is to prove a version of the Regularity Lemma for {\em arbitrary} weighted graphs and matrices.
For general matrices, we have the following result.

\begin{theorem}[SRL for matrices]\label{srlm}
For every $\epsilon>0$ and every positive integer $L$ there is a positive integer $M$ such that, for all $m,n\ge M$, every real $m$ by $n$ matrix $A$ has an  $(\epsilon)$-regular block partition $(\mathcal P,\mathcal Q)$ with $|\mathcal P|,|\mathcal Q|\in [L,M]$.
\end{theorem}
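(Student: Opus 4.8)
The natural approach is the classical energy-increment argument adapted to the matrix setting. Define, for a block partition $(\mathcal P,\mathcal Q)$ with exceptional sets, the \emph{index} (mean-square density)
$$q(\mathcal P,\mathcal Q)=\frac{1}{|V|^2|W|^2}\sum_{X\in\mathcal P,\,Y\in\mathcal Q}|X|^2|Y|^2\,d_A(X,Y)^2,$$
computed with respect to the normalized matrix $A^*$; equivalently this is $\|\mathbb E[A^*\mid \mathcal P\times\mathcal Q]\|_2^2$ for a suitable inner product. Since the average entry of $A^*$ has modulus $1$, one does \emph{not} get an a priori bound $q\le 1$ — this is exactly where the unboundedness bites — so the first and most important step is to replace $A^*$ by a \emph{truncation} $B$ that keeps only entries of modulus at most some threshold $T=T(\epsilon,L)$, and to argue that the total mass $\|A^*-B\|$ of discarded entries is small \emph{relative to what the argument needs}. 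The key observation making this possible: for any partition into parts of size roughly $|V|/k,|W|/k$, the number of blocks is $k^2$, and a single huge entry of $A^*$ affects the density of only one block by $O(T_{\rm huge}/(|V||W|/k^2))$; so by choosing the truncation threshold and the part sizes appropriately, the heavy part can be absorbed either into the exceptional sets or into a controlled error in finitely many blocks.

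\textbf{Main steps.} (1) \emph{Truncation.} Fix parameters; show that the set of rows, resp.\ columns, that carry an unusually large share of $\|A^*\|$ has size $o(|V|)$, resp.\ $o(|W|)$, and sweep these into the exceptional sets $V_0,W_0$; on the remainder, entries of $A^*$ are bounded in an averaged sense, and one further truncates individual large entries, writing $A^* = B + E$ on $V\setminus V_0$ and $W\setminus W_0$ with $B$ bounded entrywise by $T$ and $\|E\|$ small. (2) \emph{Energy increment for $B$.} Run the usual Szemer\'edi iteration on $B$: start with an arbitrary balanced partition into $L$ parts each side; if the current block partition is not $(\epsilon)$-regular, each irregular block $(X,Y)$ supplies witnessing subsets $X'\subset X$, $Y'\subset Y$, and refining simultaneously by all such witnesses raises $q$ by at least $\delta(\epsilon,T)>0$; since $q$ is bounded (now genuinely bounded, because $B$'s entries are $\le T$, giving $q\le T^2$), the iteration stops after $M_0=M_0(\epsilon,L,T)$ steps, producing the desired partition for $B$. (3) \emph{Transfer back to $A$.} Show that an $(\epsilon/2)$-regular block partition for $B$ is an $(\epsilon)$-regular block partition for $A^*$: the exceptional-set and balance conditions are arranged by construction, and regularity of a block is stable under the perturbation $E$ because $\|E\|$ was chosen small compared to $(\epsilon/M)^{O(1)}\cdot$(block size), so for \emph{all but $\epsilon|\mathcal P||\mathcal Q|$} blocks the density change $|d_{A^*}-d_B|$ on every large subpair is $<\epsilon/2$. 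Finally, refine once more if needed so that the common refinement is balanced and has between $L$ and $M$ parts on each side, updating $M$ accordingly.

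\textbf{Where the difficulty lies.} The routine part is the energy increment (Step 2) — it is verbatim Szemer\'edi, with $T^2$ in place of $1$ as the ceiling. The genuinely delicate part is Step 1 together with the bookkeeping in Step 3: one must choose the truncation threshold $T$, the size $|V_0|,|W_0|$ of the exceptional sets, and the allowed error $\|E\|$ in the \emph{correct order} of quantifiers, because the number of blocks $M$ that the iteration will eventually produce depends on $T$, yet the tolerance for $\|E\|$ in Step 3 depends on $M$. Resolving this apparent circularity — presumably by first fixing $T$ as a function of $\epsilon$ and $L$ only (so that $M$ is then determined), and only afterwards choosing how aggressively to truncate and how large to make the exceptional sets, using that the discarded mass can be made an arbitrarily small fraction of $\|A^*\|$ independently of $T$ — is the crux of the proof. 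A clean way to package this is to prove a one-step lemma: for any $\epsilon>0$ and any $T$, an $(\epsilon)$-regular partition of the $T$-truncated normalized matrix, after moving a suitable $o(1)$-fraction of rows and columns to the exceptional sets, is an $(\epsilon)$-regular partition of $A^*$ itself; combined with the bounded-entry case this yields the theorem.
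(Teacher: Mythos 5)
Your proposal takes a genuinely different route from the paper, and unfortunately that route has a gap at precisely the point you identify as ``the crux.''

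The paper does not truncate the matrix at all. Instead, it replaces the usual quadratic energy $d^2$ by a convex function $\phi=\phi_{\epsilon,D}$ that is quadratic on $|t|\le 2D$ and grows only \emph{linearly} beyond that (with $D=8/\epsilon^2$). The payoff is twofold. First, $\phi(t)\le 4D|t|$ gives the a priori bound $\phi(\mathcal P,\mathcal Q)\le 4D\|A\|$ for \emph{every} block partition of \emph{every} matrix, with no boundedness or truncation assumption whatsoever (Lemma~\ref{phiupperbound}). Second, since $\|A\|=|V||W|$ after normalization, a simple averaging argument shows that at most an $\epsilon/2$-fraction of blocks can have $|d(V_i,W_j)|\ge\epsilon D$; for the remaining irregular blocks the density lies in the quadratic regime of $\phi$, and the usual defect form of Jensen gives the energy increment (Lemma~\ref{smallepsgain}). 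In short: the paper handles the unbounded entries by making the \emph{energy functional} tolerant of large densities and then showing there are few of them, rather than by making the \emph{matrix} bounded.

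The gap in your approach is the claim, in the paragraph discussing the circularity, that ``the discarded mass can be made an arbitrarily small fraction of $\|A^*\|$ independently of $T$'' after sweeping heavy rows and columns into the exceptional sets. This is false. Take $m=n$, let $S$ be a uniformly random subset of $[n]\times[n]$ of size $n^2/(100T)$, and set $a^*_{ij}=100T$ for $(i,j)\in S$ and $a^*_{ij}=0$ otherwise, so that $\|A^*\|\approx n^2$. Every row and every column carries roughly the average mass $\approx n$, so no row or column is ``unusually heavy'' and nothing gets swept into $V_0,W_0$; yet truncating at any level $\le T$ discards essentially all of $\|A^*\|$. Consequently $\|E\|\approx\|A^*\|$ and Step~3 fails: a regular partition of the essentially-zero matrix $B$ carries no information about $A^*$. (In this particular example $A^*$ happens to be pseudorandom, so the conclusion is true for an unrelated reason, but that is not what your transfer argument shows; and one can cook up less convenient arrangements of the heavy entries.) More generally, since $T$ must be chosen \emph{before} the matrix is seen, for any fixed $T$ there are matrices whose mass lives entirely above the threshold on a uniformly spread support, and the truncation loss cannot be controlled. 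This is exactly the obstruction the paper's modified $\phi$ is designed to sidestep.

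If you want to rescue a truncation-flavored argument, you essentially have to let the cutoff live inside the energy functional rather than in the matrix, which brings you back to the paper's construction (or, equivalently, to the device attributed to \L uczak \cite{L00}).
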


For square matrices, we can demand a little more.  

\begin{theorem}[SRL for square matrices]\label{srlsm}
For every $\epsilon>0$ and every positive integer $L$ there is a positive integer $M$ such that, for all $n\ge M$, every real $n$ by $n$ matrix $A$ has a symmetric $(\epsilon)$-regular partition $(\mathcal P,\mathcal P)$ with $|\mathcal P|\in [L,M]$.
\end{theorem}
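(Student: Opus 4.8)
The plan is to re-run the energy-increment proof of Theorem~\ref{srlm}, but to carry along a \emph{single} partition $\mathcal P$ of the common index set $V=W$, using it simultaneously for rows and for columns so that the block partition $(\mathcal P,\mathcal P)$ stays symmetric at every stage. (One cannot simply apply Theorem~\ref{srlm} and then pass to the common refinement of the output row- and column-partitions: that common refinement has only boundedly many classes, but they can be wildly unbalanced, so its sub-blocks need not inherit $\epsilon$-regularity at any fixed scale.) After replacing $A$ by the normalized matrix $B=A^*$, so that $\|B\|=n^2$, the goal is to find a symmetric $\epsilon$-regular partition of $B$; we start from an arbitrary balanced symmetric partition with a small exceptional set and exactly $L$ non-exceptional classes, and refine.

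For the refinement step, suppose the current symmetric partition has non-exceptional classes $V_1,\dots,V_k$ and that $(\mathcal P,\mathcal P)$ is not $(\epsilon)$-regular, so more than $\epsilon k^2$ of the blocks $(V_i,V_j)$ fail to be $\epsilon$-regular for $B$. For each such block choose witnesses $S^{ij}\subseteq V_i$ and $T^{ij}\subseteq V_j$ with $|S^{ij}|\ge\epsilon|V_i|$, $|T^{ij}|\ge\epsilon|V_j|$, and $|d_B(S^{ij},T^{ij})-d_B(V_i,V_j)|\ge\epsilon$, and let $\mathcal P'$ be the common refinement of $\mathcal P$ together with \emph{all} of these witness sets. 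Each $V_i$ lies above at most $2k$ witness sets (at most $k$ in its role as a row class and at most $k$ in its role as a column class), so $|\mathcal P'|\le k\,2^{2k}$. The key point is that $\mathcal P'$, used for both coordinates, refines the row- and the column-partition of each irregular block, so the usual defect Cauchy--Schwarz estimate applies block by block: writing $q$ for the mean-square index of a block partition, $q(\mathcal P',\mathcal P')\ge q(\mathcal P,\mathcal P)+\sum_{(i,j)\text{ irregular}}\frac{|V_i|\,|V_j|}{n^2}\,c\epsilon^{4}$. Since the irregular blocks occupy disjoint regions of $V\times W$, and balancedness gives $|V_i|=|V_j|\ge(1-\epsilon)n/k$, the sum is at least some $c'(\epsilon)>0$ depending on $\epsilon$ alone.

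Iterating, the index $q$ increases by at least $c'(\epsilon)$ per step. Provided $q$ stays bounded above, the process halts after $t=t(\epsilon)$ steps and leaves a symmetric partition with boundedly many classes that is, say, $(\epsilon/2)$-regular for $B$; one then splits each non-exceptional class into pieces of a common size, sweeps the $o(n)$ leftover vertices into the exceptional set, and checks that this re-balancing only mildly degrades regularity (a subpair of an $\epsilon'$-regular pair whose sides are not too small remains regular with a controlled constant). Fixing all the internal parameters and size thresholds in terms of $\epsilon$ and $L$ at the start, and taking $M$ larger than the resulting bound on $|\mathcal P|$, gives the statement for every $n\ge M$.

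The main obstacle is exactly the one that makes Theorem~\ref{srlm} nontrivial, transported to the symmetric setting: the mean-square index of a block partition of $A^*$ is not bounded a priori --- a single entry of $A^*$ may be of order $n$ --- so the crux is to adapt the mechanism from the proof of Theorem~\ref{srlm} that keeps the index effectively bounded (truncating the large entries of $A^*$ and absorbing the associated rows and columns into the exceptional set) and to verify that it can be run while keeping the partition symmetric and the exceptional set common to rows and columns, which is unproblematic precisely because for a square matrix rows and columns are indexed by the same set. The remaining ingredients --- the combinatorial refinement, the index increment, and the final re-balancing --- are routine and mirror the classical symmetric Regularity Lemma.
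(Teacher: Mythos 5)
There is a genuine gap, and it is exactly the central difficulty of the paper: the boundedness of the energy. Your increment step is run with the mean-square index $q$, and you concede that the argument only terminates ``provided $q$ stays bounded above,'' deferring this to ``the mechanism from the proof of Theorem~\ref{srlm}.'' But the mechanism you describe --- truncating the large entries of $A^*$ and absorbing the associated rows and columns into the exceptional set --- is not what the paper does, and it does not work: after normalization the entries of $A^*$ can be of size, say, $\sqrt n$ with essentially all of $\|A^*\|=n^2$ carried by such entries spread over \emph{every} row and column, so the offending rows and columns cannot be hidden in an exceptional set of size $<\epsilon n$, and simply zeroing out large entries changes the block densities, so regularity for the truncated matrix does not transfer back to $A^*$. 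Without some such device the scheme fails: for a partition into $k$ parts the index $q$ of $A^*$ can be as large as order $k^2$ (all mass in one block), so the trivial upper bound grows with the number of classes produced by the iteration, and a constant-per-step increment gives no bound on the number of steps. The paper's actual device is different: it keeps all entries and replaces the quadratic by the truncated convex function $\phi_{\epsilon,D}$ (quadratic up to $2D$, linear beyond, $D=8/\epsilon^2$), which satisfies the partition-independent bound $\phi(\mathcal P,\mathcal Q)\le 4D\|A\|$ (Lemma~\ref{phiupperbound}); the price is that the $\epsilon^4$-gain for an irregular block (Lemma~\ref{smallepsgain}) is only available when the block density is at most $\epsilon D$, and Lemma~\ref{mainl} has a separate counting argument showing that, because $\|A\|=mn$, at most half of the irregular blocks can have density $\ge\epsilon D$. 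None of this appears in your proposal, so the proof is incomplete at its crux.

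On the purely symmetric side your route is actually fine, and somewhat different from the paper's: you refine by the common refinement of all witness sets, so the single partition $\mathcal P'$ automatically refines both the row and column witness partitions of every irregular block, including diagonal blocks and both members of a transposed pair $(V_i,V_j)$, $(V_j,V_i)$. The paper instead refines pair by pair with $\mathcal C_{ij}=\mathcal D_{ji}$, takes the trivial partition on diagonal blocks, accepts that for a non-symmetric $A$ only one of each transposed pair may yield a gain, and compensates by starting with $4L/\epsilon$ classes so that these losses cost only a constant factor in the increment. Your bookkeeping for the symmetric constraint would be a legitimate (even slightly cleaner) alternative --- but only once the energy-boundedness issue above is resolved, which is the substance of the theorem.
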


Given a graph $G$, we can apply Theorem \ref{srlsm} to the adjacency matrix $A$ of $G$ (note that partitions of $V(G)$ correspond to {\em symmetric} partitions of $A$).  There is a rough correspondence between $(\epsilon)$-regular partitions of $G$ and symmetric, $(\epsilon)$-regular block partitions of the adjacency matrix $A(G)$: each gives an $(O(\epsilon))$-regular partition of the other (the slight difference is caused by the fact that diagonal blocks are relevant in a block partition $(\mathcal P,\mathcal P)$ but do not correspond to pairs in the partition $\mathcal P$ of $V(G)$; however this is easily handled by a slight rescaling of $\epsilon$).
Theorem \ref{srlsm} therefore has the following immediate corollary.

\begin{theorem}[SRL for sparse graphs]\label{srlsg}
For every $\epsilon>0$ and every positive integer $m$ there is a positive integer $M$ such that every graph $G$ with at least $M$ vertices has an $(\epsilon)$-regular partition $\mathcal P$ with $|\mathcal P|\in [m,M]$.
\end{theorem}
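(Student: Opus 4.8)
The plan is to derive Theorem~\ref{srlsg} directly from Theorem~\ref{srlsm}, applied to the adjacency matrix $A=A(G)$, along the lines indicated in the paragraph preceding the statement. All of the real content is in Theorem~\ref{srlsm}; the remaining task is to convert the ``rough correspondence'' between symmetric block partitions of $A(G)$ and vertex partitions of $G$ into a genuine implication, with the small discrepancies absorbed into a rescaling of $\epsilon$. Concretely, I would fix $\epsilon>0$ and an integer $m\ge 1$, put $\epsilon':=\epsilon/2$, and let $M:=M(\epsilon',m)$ be the integer supplied by Theorem~\ref{srlsm} with $L=m$. Given a graph $G$ on $n\ge M$ vertices --- we may assume $e(G)\ge 1$, as otherwise the normalization below is undefined and the statement is degenerate --- let $A$ be the symmetric $0/1$ adjacency matrix of $G$, with rows and columns indexed by $V=V(G)$, and apply Theorem~\ref{srlsm} to obtain a symmetric $(\epsilon')$-regular block partition $(\mathcal P,\mathcal P)$ with exceptional set $V_0$, where $\mathcal P=\{V_0,V_1,\dots,V_k\}$ and $k=|\mathcal P|\in[m,M]$.

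Next I would check that $\mathcal P$, viewed as a partition of $V(G)$ with exceptional set $V_0$, is $(\epsilon)$-regular. Balancedness of the nonexceptional classes and the size bound $|V_0|<\epsilon'|V|<\epsilon|G|$ transfer verbatim. For the pair condition, the key observation is that for disjoint $X,Y\subseteq V$ one has $w_A(X,Y)=e(X,Y)$, hence $d_A(X,Y)=d_G(X,Y)$, while the matrix normalization constant satisfies $\frac{|V|^2}{\|A\|}=\frac{n^2}{2e(G)}=\frac{n}{(n-1)d}$ with $d$ the density of $G$. Thus, for $i\ne j$ with $i,j\ge 1$, if the block $A^*_{V_i,V_j}$ is $\epsilon'$-regular then for every $X'\subseteq V_i$, $Y'\subseteq V_j$ with $|X'|\ge\epsilon|V_i|$ and $|Y'|\ge\epsilon|V_j|$ (which in particular meet the weaker $\epsilon'$-thresholds of the block condition) we obtain
$$|d_G(X',Y')-d_G(V_i,V_j)|\le\epsilon'\cdot\frac{n-1}{n}\,d<\epsilon d,$$
so the pair $(V_i,V_j)$ is $(\epsilon,d)$-regular in $G$. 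Finally I would count the bad pairs: at most $\epsilon'k^2$ of the $k^2$ nonexceptional blocks of $(\mathcal P,\mathcal P)$ fail to be $\epsilon'$-regular; deleting the at most $k$ diagonal blocks (which correspond to no pair of $\mathcal P$) and noting that off-diagonal bad blocks occur in transpose pairs, the number of unordered pairs $\{V_i,V_j\}$ with $i>j\ge1$ that are not $(\epsilon,d)$-regular in $G$ is at most $\epsilon'k^2<\epsilon k^2$. Hence $\mathcal P$ is an $(\epsilon)$-regular partition of $G$ with $|\mathcal P|\in[m,M]$, as required.

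I do not anticipate a real obstacle here: the argument is pure bookkeeping once Theorem~\ref{srlsm} is granted. The two places that need a little care are the harmless factor $\frac{n}{n-1}$ arising from the mismatch between the matrix normalization $|V|^2/\|A\|$ and the graph normalization $1/d$, and the handling of the diagonal blocks $A_{V_i,V_i}$, which are counted among the nonexceptional blocks of the block partition but have no counterpart among the pairs of the graph partition; both are comfortably absorbed by the choice $\epsilon'=\epsilon/2$ (indeed any $\epsilon'<\epsilon$ works). One could alternatively reprove Theorem~\ref{srlsg} in parallel with Theorems~\ref{srlm}--\ref{srlsm}, but via the matrix formulation it is a one-step corollary, so this is the route I would take.
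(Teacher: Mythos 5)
Your proposal is correct and follows exactly the route the paper indicates: apply Theorem~\ref{srlsm} to the adjacency matrix with a slightly smaller $\epsilon'$, translate $d_{A^*}$ into $d_G$ via the normalization constant $n^2/2e(G)=\frac{n}{n-1}\cdot\frac1d$, and discard the diagonal blocks. The paper presents Theorem~\ref{srlsg} as an ``immediate corollary'' without writing out the bookkeeping; you have supplied that bookkeeping, and it checks out (the factor $\tfrac{n-1}{n}\le 1$ and the transpose-pairing of bad off-diagonal blocks both work in your favor, so $\epsilon'=\epsilon/2$ is more than enough).
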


In fact, by the same argument, the result holds for graphs with an arbitrary edge-weighting.

Note that Theorem \ref{srlsg} is stronger than Theorem \ref{KRthm}, as it does not have the upper uniformity hypothesis.  However, some additional constraints may be needed in applications (see Section \ref{s4}).

\subsection{Notation}\label{notation}
Let $G$ be a graph.  For $X\subset V(G)$ we define $e_G(X)=e(G[X])$ to be the number of edges induced by $X$; for disjoint $X,Y$ we define $e(X,Y)=|\{xy\in E(G):x\in X, y\in Y\}|$.  For a set of vertices $X\subset V(G)$ the {\em density} of $X$ is $d(X)=e(X)/\binom{|X|}{2}$, and the {\em density of $G$} is $d(G)=d(V(G))$; for disjoint sets $X,Y\subset V(G)$ we define $d(X,Y)=e(X,Y)/(|X| |Y|)$.  

For integers $a<b$, we write $[a,b]=\{a,a+1,\ldots,b\}$.

We will write partitions either as $\mathcal P=V_1\cup\cdots\cup V_k$ or as $\mathcal P=\{V_0,\ldots,V_k\}$.
For a partition $\mathcal P=V_1\cup\cdots\cup V_k$ of a set $S$, we write $|\mathcal P|=k$;
for a partition $\mathcal P=V_0\cup\cdots\cup V_k$ with exceptional set $V_0$, we write $|\mathcal P|=k$, i.e.~we are counting the number of nonexceptional classes.  

\section{Averaging}

Given a matrix $A=(a_{xy})$ with rows and columns indexed by $V$ and $W$ respectively, we will be interested in various partitions of $A$ into blocks and in how the entries of $A$ are distributed with respect to these partitions. 

Let $\phi:\R\to\R$ be a function.
For subsets $X\subset V$ and $Y\subset W$, we define
$$
\phi(X,Y)=|X| |Y|\phi(d(X,Y)).
$$  
More generally, if ${\mathcal P}=V_1\cup\cdots\cup V_k$ is a partition of some subset of $V$
and ${\mathcal Q}=W_1\cup\cdots\cup W_l$ is a partition of some subset of $W$, we define
$$\phi(\mathcal P,\mathcal Q)=\sum_{i,j}\phi(V_i,W_j).$$  

This can also be seen in terms of ``smoothing'' or ``averaging''.
For partitions ${\mathcal P}=V_1\cup\cdots\cup V_k$ of $V$
and ${\mathcal Q}=W_1\cup\cdots\cup W_l$ of $W$
we define the matrix $A_{\mathcal P,\mathcal Q}$ by (for every $i,j$)
$$
(A_{\mathcal P,\mathcal Q})_{xy}=
d(V_i,W_j)\quad \text{for $x\in V_i$ and $y\in W_j$.}
$$
Thus $A_{\mathcal P,\mathcal Q}$ is the matrix obtained by averaging the entries of $A$ inside each block $(V_i,W_j)$. If $\mathcal P$ is a partition of some subset $X$ of $V$ and $\mathcal Q$ is a partition of some subset $Y$ of $W$ then we define
$$A_{\mathcal P,\mathcal Q}=(A_{X,Y})_{\mathcal P,\mathcal Q}.$$
Note that if $\mathcal P'$ and $\mathcal Q'$ are refinements of $\mathcal P$ and $\mathcal Q$ then
\begin{equation}\label{tower}
 (A_{\mathcal P',\mathcal Q'})_{\mathcal P,\mathcal Q}=A_{\mathcal P,\mathcal Q}.
\end{equation}
This follows by a trivial calculation (and is a special case of the tower law for conditional expectation).

For a real-valued function $\phi$, 
we can now define
$$\phi(A)=\sum_{x\in V,y\in W}\phi(a_{xy}).$$
Thus, writing $d_{ij}=d(V_i,W_j)$, we have
\begin{equation}\label{phidef}
\phi(\mathcal P,\mathcal Q)
=\sum_{i,j}\phi(d_{ij})|V_i| |W_j|
=\phi(A_{\mathcal P,\mathcal Q}).
\end{equation}

Note that $\phi$ is additive.  For instance, given partitions $\mathcal P=V_1\cup\cdots\cup V_k$ and $\mathcal Q=W_1\cup\cdots\cup W_l$ of (subsets of) $V$ and $W$, and refinements $\mathcal P'$ of $\mathcal P$ and $\mathcal Q'$ of $\mathcal Q$, we can write $\mathcal P_i'$, $\mathcal Q_j'$ for the partitions of $V_i$ and $W_j$ induced by $\mathcal P'$, $\mathcal Q'$ respectively.  Then 
$$\phi(\mathcal P',\mathcal Q')=\sum_{i,j}\phi(\mathcal P_i',\mathcal Q_j').$$

We have not said how $\phi(\mathcal P,\mathcal Q)$ is defined for partitions with exceptional sets.  If $\mathcal P=V_0\cup\cdots\cup V_k$ is a partition with exceptional set $V_0$, we define $\widetilde{\mathcal P}$ to be the partition obtained from $\mathcal P$ by splitting $V_0$ into singletons.  If $(\mathcal P,\mathcal Q)$ has exceptional sets $(V_0,W_0)$, we define $\phi(\mathcal P,\mathcal Q)$ by
\begin{equation}\label{phiexcept}
\phi(\mathcal P,\mathcal Q)=\phi(\widetilde{\mathcal P},\widetilde{\mathcal Q}).
\end{equation}

\section{Proofs}

The usual proof of the Regularity Lemma proceeds roughly as follows:
\begin{itemize}
\item  Define a function $f$ on partitions $\mathcal P$ of $G$ such that $0\le f(\mathcal P)\le 1$ for any partition
\item  Show that if a partition $\mathcal P$ is not $\epsilon$-regular then there is a balanced refinement $\mathcal Q$ of $\mathcal P$ so that $|\mathcal Q|$ is bounded (as a function of $|\mathcal P|$) and $f(\mathcal Q)\ge f(\mathcal P)+\alpha$, where $\alpha=\alpha(\epsilon)$ is a fixed constant
\item  Iterating the step above, we get a sequence of partitions, each refining the previous one and increasing the value of $f$ by a constant.  Since $f$ is nonnegative and bounded above by 1, the process cannot have more than $1/\alpha$ iterations, and so we must terminate with an $\epsilon$-regular partition with a bounded (but possibly very large) number of classes.
\end{itemize}

We take the same general approach.  However, there is a significant obstacle in the proof.
The standard function to use for partitions $\mathcal P=V_0\cup\cdots\cup V_k$ is
\begin{equation}\label{standard}
 f(\mathcal P)=\frac{1}{n^2}\sum_{1\le i<j\le k}d(V_i,V_j)^2|V_i| |V_j|.
\end{equation}
Since all densities $d(V_i,V_j)$ are at most 1, it is easily seen that $f(\mathcal P)\le1$ for any partition $\mathcal P$.
However, 
for arbitrary weighted graphs or matrices, this is no longer true: for instance, in a weighted graph we can have average weight 1 but almost all the mass 
concentrated on a small number of edges, which can make $f(\mathcal P)$ arbitrarily large.

An important feature of our argument is that, rather than using a quadratic function as in \eqref{standard}, we can instead use a different convex function 
(this approach has previously been used by \L uczak \cite{L00}).
We will therefore work with a different function $\phi(x)$, which is quadratic for small values of $x$ but has a ``cutoff'' after which it becomes linear, to prevent large entries making a disproportionate contribution to $\phi(\mathcal P,\mathcal Q)$.  
It is also convenient to work with the (slightly) 
greater generality of a matrix rather than an edge-weighted graph; of course, as noted already, we can readily translate between the two by considering the (weighted) 
adjacency matrix.  Finally, for convenience, we leave out the $1/n^2$ factor from our weight functions.

\subsection{Effects of refinement}
We start by looking at the effects on $\phi(\mathcal P,\mathcal Q)$ of refining the partitions $\mathcal P$ and $\mathcal Q$. We first show that, provided $\phi$ is convex, refinements do not decrease $\phi(\mathcal P,\mathcal Q)$.

\begin{lemma}\label{monotone}
Let $A=(a_{xy})$ be a real matrix with rows indexed by $V$ and columns indexed by $W$. Let $\phi:\R\to\R$ be a convex function.
Suppose that $(\mathcal P,\mathcal Q)$ is a block partition of $A$ and $(\mathcal P',\mathcal Q')$ is a refinement of $(\mathcal P,\mathcal Q)$.  Then
\begin{equation*}
\phi(\mathcal P', \mathcal Q')\ge \phi(\mathcal P,\mathcal Q).
\end{equation*}
\end{lemma}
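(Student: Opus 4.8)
The plan is to reduce the statement to a blockwise application of Jensen's inequality, with the tower law \eqref{tower} supplying the fact that the relevant block-averages are preserved under coarsening. The convexity of $\phi$ enters only through this Jensen step.

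First I would rewrite everything in the matrix language of \eqref{phidef}, so that $\phi(\mathcal P,\mathcal Q)=\phi(A_{\mathcal P,\mathcal Q})$ and $\phi(\mathcal P',\mathcal Q')=\phi(A_{\mathcal P',\mathcal Q'})$. Put $B=A_{\mathcal P',\mathcal Q'}$. Since $(\mathcal P,\mathcal Q)$ is coarser than $(\mathcal P',\mathcal Q')$, the tower law \eqref{tower} gives
$$B_{\mathcal P,\mathcal Q}=(A_{\mathcal P',\mathcal Q'})_{\mathcal P,\mathcal Q}=A_{\mathcal P,\mathcal Q},$$
so it is enough to prove $\phi(B)\ge\phi(B_{\mathcal P,\mathcal Q})$; that is, averaging a matrix over the blocks of a single block partition can only decrease $\phi$.

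For this I would split both sides over the blocks $(V_i,W_j)$ of $(\mathcal P,\mathcal Q)$. By the definition of $\phi(\cdot)$ on matrices we have $\phi(B)=\sum_{i,j}\sum_{x\in V_i,\,y\in W_j}\phi(b_{xy})$, while $\phi(B_{\mathcal P,\mathcal Q})=\sum_{i,j}|V_i|\,|W_j|\,\phi(c_{ij})$, where $c_{ij}$ is the mean of the $|V_i|\,|W_j|$ entries of $B$ in the block $(V_i,W_j)$. Applying Jensen's inequality to the convex function $\phi$, with the uniform distribution on the entries of each block, gives $\sum_{x\in V_i,\,y\in W_j}\phi(b_{xy})\ge|V_i|\,|W_j|\,\phi(c_{ij})$, and summing over $i,j$ completes the proof. (Equivalently one can bypass $B$ entirely: by the additivity of $\phi$ it suffices to treat trivial $\mathcal P=\{V\}$, $\mathcal Q=\{W\}$, and then observe that $d(V,W)=\sum_{a,b}\frac{|V'_a|\,|W'_b|}{|V|\,|W|}\,d(V'_a,W'_b)$ is a convex combination of the densities of the refined blocks, so Jensen applies at once and multiplying through by $|V|\,|W|$ recovers the claim.)

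I do not expect a genuine obstacle here: $\phi$ is real-valued, so the finite form of Jensen's inequality needs nothing beyond convexity, and the only point requiring care is the identity $c_{ij}=d_A(V_i,W_j)$ — i.e.\ that the within-block mean of $B=A_{\mathcal P',\mathcal Q'}$ agrees with the density of $A$ over the coarse block — which is precisely \eqref{tower}. If the statement is wanted for partitions carrying exceptional sets, it then follows from the case above via \eqref{phiexcept}, since passing to $\widetilde{\mathcal P},\widetilde{\mathcal Q}$ and $\widetilde{\mathcal P'},\widetilde{\mathcal Q'}$ reduces it to a refinement of ordinary block partitions.
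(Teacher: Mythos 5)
Your proposal is correct and takes essentially the same approach as the paper: your main route via the averaged matrix $B=A_{\mathcal P',\mathcal Q'}$, the tower law \eqref{tower}, and blockwise Jensen is exactly what the paper's opening remark (``this follows easily from Jensen's Inequality for conditional expectation'') alludes to, and your parenthetical alternative — reduce to trivial $\mathcal P,\mathcal Q$ by additivity and apply Jensen to the convex combination of densities — is precisely the paper's written-out proof.
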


\begin{proof}
In light of \eqref{tower} and \eqref{phidef}, this follows easily from Jensen's Inequality for conditional expectation.  However, we give a proof for completeness.

Suppose first that $\mathcal P=\{V\}$ and $\mathcal Q=\{W\}$ are the trivial partitions, and $\mathcal P'=X_1\cup\cdots\cup X_r$ and $\mathcal Q'=Y_1\cup\cdots\cup Y_s$.  Let $d=d(V,W)$ and $d_{ij}=d(X_i,Y_j)$, so (since the total weight is conserved)
$$d|V| |W|=\sum_{i,j}d_{ij}|X_i| |Y_j|.$$
Since $\phi$ is convex, and $\sum_{i,j}|X_i| |Y_j|=|V| |W|$, we have
$$\phi(d)\le\sum_{i,j}\frac{|X_i| |Y_j|}{|V| |W|}\phi(d_{ij}),$$
and so
\begin{equation}\label{eqla}
\phi(\mathcal P,\mathcal Q)=\phi(d)|V| |W|\le \sum_{i,j}\phi(d_{ij})|X_i| |Y_j|=\phi(\mathcal P',\mathcal Q').
\end{equation}
Now suppose $\mathcal P=V_1\cup\cdots\cup V_k$ 
and $\mathcal Q=W_1\cup\cdots\cup W_l$.  Writing $\mathcal P_i'$ and $\mathcal Q_j'$ respectively for the partitions of $V_i$ and $W_j$ induced by $\mathcal P'$ and $\mathcal Q'$, we have by \eqref{eqla} that
$
\phi(\mathcal P,\mathcal Q)
=\sum_{i,j}\phi(V_i,W_j)
\le\sum_{i,j}\phi(\mathcal P_i',\mathcal Q_j')\\
=\phi(\mathcal P', \mathcal Q')$.
\end{proof}

Note that Lemma \ref{monotone} also applies to partitions with exceptional sets: if $\mathcal P$ is a partition with exceptional set $V_0$ 
and $\mathcal P'$ has exceptional set $V_0'$, we say that $\mathcal P'$ is a {\em refinement} of $\mathcal P$ if
$V_0'\supset V_0$, and every other element of $\mathcal P'$ is contained in some element of $\mathcal P$.  This notion of refinement extends naturally to block partitions $(\mathcal P, \mathcal Q)$ with exceptional sets $(V_0,W_0)$, by considering $\mathcal P$ and $\mathcal Q$ separately.
Then Lemma \ref{monotone} is then easily seen to apply: if $(\mathcal P',\mathcal Q')$ with exceptional sets $(V_0',W_0')$ is a refinement of $(\mathcal P,\mathcal Q)$ with exceptional sets $(V_0,W_0)$, then
$$\phi(\mathcal P', \mathcal Q')\ge \phi(\mathcal P,\mathcal Q).$$

\subsection{Finding a good refinement} 
In order to obtain quantitative bounds on the effects of refinement, we must choose a specific convex function for $\phi$.
Let us fix $\epsilon>0$ and $D>0$, and define the function $\phi=\phi_{\epsilon,D}$ by
$$
\phi(t)=
\begin{cases}
t^2 & \text{if $|t|\le 2D$}\\
4D(|t|-D) & \text{otherwise}.
\end{cases}
$$
Note that $\phi$ is convex and $\phi(t)\le 4D|t|$ for all $t$.  We will choose a value $D=D(\epsilon)$ later (in order to clarify the presentation, we carry constants through the lemmas below).

We note the following trivial bound on $\phi(A)$.

\begin{lemma}\label{phiupperbound}
Let $A=(a_{xy})$ be a matrix, and
let $(\mathcal P,\mathcal Q)$ be a block partition of $A$. Then
$$\phi(\mathcal P,\mathcal Q)\le 4D ||A||.$$
\end{lemma}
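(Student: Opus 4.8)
The plan is to bound $\phi(\mathcal P,\mathcal Q)$ term by term using the elementary pointwise estimate $\phi(t)\le 4D|t|$ that was just recorded. Write $d_{ij}=d(V_i,W_j)$ for the densities of the blocks; by \eqref{phidef} we have
$$\phi(\mathcal P,\mathcal Q)=\sum_{i,j}\phi(d_{ij})|V_i|\,|W_j|.$$
Applying $\phi(d_{ij})\le 4D|d_{ij}|$ to each summand gives
$$\phi(\mathcal P,\mathcal Q)\le 4D\sum_{i,j}|d_{ij}|\,|V_i|\,|W_j|=4D\sum_{i,j}\Bigl|\,\sum_{x\in V_i,\,y\in W_j}a_{xy}\Bigr|,$$
since $d_{ij}|V_i||W_j|=w_A(V_i,W_j)=\sum_{x\in V_i,y\in W_j}a_{xy}$. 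Now the triangle inequality bounds the inner modulus by $\sum_{x\in V_i,y\in W_j}|a_{xy}|$, and summing over all blocks $(i,j)$ just sums $|a_{xy}|$ over all $x\in V$, $y\in W$ (each entry appearing once, as $(\mathcal P,\mathcal Q)$ is a partition), yielding $\sum_{i,j}\sum_{x\in V_i,y\in W_j}|a_{xy}|=\|A\|$. Hence $\phi(\mathcal P,\mathcal Q)\le 4D\|A\|$.

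One small point to address is the case of partitions with exceptional sets: by the convention \eqref{phiexcept}, $\phi(\mathcal P,\mathcal Q)=\phi(\widetilde{\mathcal P},\widetilde{\mathcal Q})$, and $(\widetilde{\mathcal P},\widetilde{\mathcal Q})$ is still a genuine block partition of $A$ (the exceptional sets merely broken into singletons), so the bound just proved applies verbatim. There is really no obstacle here — the only thing to be slightly careful about is that $\phi(t)\le 4D|t|$ holds on both branches of the definition of $\phi$ (for $|t|\le 2D$ one needs $t^2\le 4D|t|$, i.e. $|t|\le 4D$, which is true since $|t|\le 2D$; for $|t|>2D$ it is $4D(|t|-D)\le 4D|t|$, immediate), and this was already noted in the text preceding the lemma. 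So the proof is a two-line computation chaining the pointwise bound with the triangle inequality.
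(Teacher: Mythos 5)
Your proof is correct, and it takes a slightly different route from the paper. The paper's proof refines $(\mathcal P,\mathcal Q)$ all the way down to the partitions into singletons and invokes Lemma \ref{monotone} (i.e.\ convexity of $\phi$ via Jensen) to get $\phi(\mathcal P,\mathcal Q)\le\phi(\mathcal P^*,\mathcal Q^*)=\sum_{i,j}\phi(a_{ij})\le 4D\sum_{i,j}|a_{ij}|=4D\|A\|$, applying the pointwise bound $\phi(t)\le 4D|t|$ only at the level of individual entries. You instead apply that pointwise bound directly to the block densities in \eqref{phidef} and then use the triangle inequality $|w_A(V_i,W_j)|\le\sum_{x\in V_i,y\in W_j}|a_{xy}|$ to sum up to $\|A\|$. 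What your version buys is that it does not use convexity of $\phi$ at all (any function satisfying $\phi(t)\le 4D|t|$ would do) and is self-contained; what the paper's version buys is economy, since Lemma \ref{monotone} is already available and does all the work in one line. Your treatment of the exceptional-set case via \eqref{phiexcept}, and your check that $\phi(t)\le 4D|t|$ holds on both branches of the definition, match the paper's remarks exactly.
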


\begin{proof}
Let $\mathcal P^*$ and $\mathcal Q^*$ be the partitions of $V$ and $W$ respectively into singletons.
Then, by Lemma \ref{monotone}, we have
$$\phi(\mathcal P,\mathcal Q)\le\phi(\mathcal P^*,\mathcal Q^*)=\sum_{i,j}\phi(a_{ij})\le4D\sum_{i,j}|a_{ij}|=4D||A||.$$
\end{proof}

Note that the definition \eqref{phiexcept} implies that the same result applies to block partitions with exceptional sets.

Lemma \ref{monotone} shows that refinements do not decrease $\phi$; we will also need to find refinements that increase $\phi$ by a significant amount.  This will
be a consequence of the following lemma.

\begin{lemma}\label{smallepsgain}
Suppose $D\ge1$, $\epsilon\in(0,1/2)$, and 
$A$ is a matrix with rows indexed by $V$ and columns indexed by $W$.
Supose that $X\subset V$ and $Y\subset W$.
If $|d(X,Y)|\le\epsilon D$ and $(X,Y)$ is not $\epsilon$-regular then there are partitions
$\mathcal X=X_1\cup X_2$ of $X$ and $\mathcal Y=Y_1\cup Y_2$ of $Y$ such that 
\begin{equation}\label{gain}
\phi(\mathcal X, \mathcal Y)\ge \phi(X,Y)+\epsilon^4|X| |Y|.
\end{equation}
\end{lemma}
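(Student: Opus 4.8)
The plan is to mimic the classical ``energy increment from irregularity'' argument, but adapted to the truncated convex function $\phi$ and to the fact that we only want to split $X$ and $Y$ into \emph{two} parts each. Since $(X,Y)$ is not $\epsilon$-regular, by definition there exist witness sets $X'\subset X$, $Y'\subset Y$ with $|X'|\ge\epsilon|X|$, $|Y'|\ge\epsilon|Y|$ and $|d(X',Y')-d(X,Y)|>\epsilon$. I would take $\mathcal X=\{X',X\setminus X'\}$ and $\mathcal Y=\{Y',Y\setminus Y'\}$, so that $(\mathcal X,\mathcal Y)$ is a refinement of the one-block partition $(X,Y)$ into at most four blocks. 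By Lemma~\ref{monotone} we already know $\phi(\mathcal X,\mathcal Y)\ge\phi(X,Y)$; the whole point is to quantify the slack and show it is at least $\epsilon^4|X||Y|$.

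The key computation is the defect form of Jensen's inequality for the specific $\phi$. Write $d=d(X,Y)$ and let $d_{ij}$ be the densities of the four blocks, with weights $\lambda_{ij}=|X_i||Y_j|/(|X||Y|)$ summing to $1$, so $\sum_{ij}\lambda_{ij}d_{ij}=d$ by conservation of total weight. Then
\begin{equation*}
\phi(\mathcal X,\mathcal Y)-\phi(X,Y)=|X||Y|\Bigl(\sum_{ij}\lambda_{ij}\phi(d_{ij})-\phi(d)\Bigr).
\end{equation*}
The hypothesis $|d|\le\epsilon D$ together with $D\ge 1$, $\epsilon<1/2$ guarantees $|d|\le D/2<2D$, so $\phi$ agrees with $t^2$ on a neighbourhood of $d$; in fact I would argue that for \emph{any} $t$ one has $\phi(t)\ge t^2-$ (something), or more simply, since $\phi$ is convex with $\phi(d)=d^2$ and (sub)derivative $2d$ at $d$, that $\phi(t)\ge d^2+2d(t-d)$ for all $t$, hence $\sum_{ij}\lambda_{ij}\phi(d_{ij})-\phi(d)\ge\sum_{ij}\lambda_{ij}(d_{ij}-d)^2$ — wait, that last step needs $\phi(t)\ge d^2+2d(t-d)+(t-d)^2=t^2$, which holds when $|t|\le 2D$ but fails for large $|t|$. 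So I must be more careful: I would instead bound the excess below by the contribution of the block $(X_1,Y_1)=(X',Y')$ alone, restricting attention to the regime where its density $d_{11}$ is moderate, and use a direct estimate $\lambda_{11}\phi(d_{11})+$ (rest) $\ge\phi(d)+\lambda_{11}(d_{11}-d)^2$ valid because the tangent-line bound $\phi(d_{ij})\ge d^2+2d(d_{ij}-d)$ holds for \emph{all} $d_{ij}$ (convexity plus the subgradient at $d$), while for the single term $(1,1)$ we additionally have $\phi(d_{11})\ge d^2+2d(d_{11}-d)+(d_{11}-d)^2$ provided $|d_{11}|\le 2D$. The case $|d_{11}|>2D$ is handled separately, since then $d_{11}$ is very far from $d$ (as $|d|\le\epsilon D\le D/2$) so $|d_{11}-d|\ge 3D/2$, and one checks $\phi(d_{11})\ge d^2+2d(d_{11}-d)+(d_{11}-d)$ or similar, which beats $(d_{11}-d)^2$-type gains anyway — in either case the net gain is at least $\lambda_{11}\min\{(d_{11}-d)^2,\text{linear term}\}$, and since $|d_{11}-d|>\epsilon$ and $\lambda_{11}=|X'||Y'|/(|X||Y|)\ge\epsilon^2$, we get a gain of at least $\epsilon^2\cdot\epsilon^2=\epsilon^4$ times $|X||Y|$.

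So the skeleton is: (i) extract the witness sets $X',Y'$ from non-regularity; (ii) form the $2\times 2\times$ refinement; (iii) expand $\phi(\mathcal X,\mathcal Y)-\phi(X,Y)$ using weight conservation; (iv) apply the global tangent-line (subgradient) inequality for convex $\phi$ at $d$ to all blocks, and the stronger quadratic lower bound $\phi(t)\ge d^2+2d(t-d)+(t-d)^2$ to the single block $(X',Y')$, splitting into the cases $|d_{11}|\le 2D$ and $|d_{11}|>2D$; (v) use $|d_{11}-d|>\epsilon$ and $|X'||Y'|\ge\epsilon^2|X||Y|$ to conclude. The main obstacle, and the only place real care is needed, is step (iv): the truncated $\phi$ does \emph{not} dominate $t^2$ globally, so one cannot naively say ``the defect in Jensen equals the variance''; one has to peel off exactly the one witness block where the density is controlled and treat the truncated (linear) regime of $\phi$ by a crude but sufficient linear-in-$|d_{11}-d|$ bound. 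Everything else is routine bookkeeping with the weights $\lambda_{ij}$.
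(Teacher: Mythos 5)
Your proof is correct, but it handles the key difficulty (the truncation of $\phi$) by a genuinely different mechanism than the paper. The paper first trims the witness sets so that $\epsilon|X|\le|X_1|\le|X|/2$ and $\epsilon|Y|\le|Y_1|\le|Y|/2$ (via a random-subset averaging argument), which forces all four parts to have size at least $\epsilon|X|$, resp.\ $\epsilon|Y|$; it then splits into two cases: if some block has $|d_{ij}|\ge 2D$, that single block alone gives $\phi(\mathcal X,\mathcal Y)\ge \phi(2D)\epsilon^2|X||Y|=4\epsilon^2D^2|X||Y|$, which exceeds $\phi(X,Y)\le \epsilon^2D^2|X||Y|$ by much more than $\epsilon^4|X||Y|$, while if all block densities are below $2D$ then $\phi$ is exactly quadratic on every block and the classical identity $\sum_{i,j}(d+\eta_{ij})^2|X_i||Y_j|=d^2|X||Y|+\sum_{i,j}\eta_{ij}^2|X_i||Y_j|$ yields the gain from $\eta_{11}^2|X_1||Y_1|\ge\epsilon^4|X||Y|$. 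You instead keep the untrimmed witness sets and use the global subgradient inequality $\phi(t)\ge d^2+2d(t-d)$ (legitimate since $|d|\le\epsilon D<2D$, so $\phi'(d)=2d$) on every block, extracting the excess only from the witness block $(X',Y')$: there the excess is $(d_{11}-d)^2\ge\epsilon^2$ when $|d_{11}|\le 2D$, and when $|d_{11}|>2D$ a short computation gives excess at least $(2D-|d|)^2\ge 9D^2/4>\epsilon^2$; multiplied by $\lambda_{11}\ge\epsilon^2$ this gives \eqref{gain}. Your write-up of that last case is loose (the displayed bound with the extra term $(d_{11}-d)$ is not quite the right statement, since $d_{11}-d$ may be negative), but the inequality you actually need is true and easily checked, so this is presentational rather than a gap. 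What each approach buys: your peel-off-one-block argument avoids both the trimming step and any case analysis over \emph{which} block is heavy, since the only block whose size you need to control is the witness block, whose measure $\ge\epsilon^2|X||Y|$ comes straight from the definition of irregularity; the paper's trimming, on the other hand, guarantees in passing that both parts of each partition are nonempty and of size at least $\epsilon|X|$, resp.\ $\epsilon|Y|$, whereas in your version you should add one sentence covering the degenerate case $X'=X$ or $Y'=Y$, where one part of your partition is empty (its blocks carry zero weight, so the computation is unaffected, but the ``partition into two sets'' is then trivial).
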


\begin{proof}
Since $(X,Y)$ is not $\epsilon$-regular, we can find sets $X_1\subset X$ and $Y_1\subset Y$ such that $|X_1|\ge\epsilon|X|$, $|Y_1|\ge\epsilon|Y|$ and $|d(X_1,Y_1)-d(X,Y)|\ge\epsilon$.  Now if $|X_1|>|X|/2$ then we can replace $X_1$ by a subset $X_1'\subset X_1$ with $|X_1'|=\lfloor |X|/2\rfloor$ and $|d(X_1',Y_1)-d(X,Y)|\ge\epsilon$ (consider a random $X_1'\subset X_1$ of this size).  Thus we may assume that $\epsilon|X|\le|X_1|\le|X|/2$, and similarly $\epsilon|Y|\le|Y_1|\le|Y|/2$.  Let $X_2=X\setminus X_1$ and $Y_2=Y\setminus Y_2$, and note that 
$$\min\{|X_1|,|X_2|\}\ge\epsilon|X|, \quad \min\{|Y_1|,|Y_2|\}\ge\epsilon|Y|.$$
Let $\mathcal X=X_1\cup X_2$ and $\mathcal Y=Y_1\cup Y_2$.

Now if any pair $(X_i,Y_j)$ has $|d(X_i,Y_j)|\ge2D$ then 
\begin{align*}
\phi(\mathcal X,\mathcal Y)
&\ge\phi(d_{ij})|X_i| |Y_j|\\
&\ge\phi(2D)\cdot\epsilon|X|\cdot\epsilon|Y|\\
&\ge 4\epsilon^2D^2|X| |Y|.
\end{align*}
But $\phi(X,Y)\le\phi(\epsilon D)|X| |Y|=\epsilon^2D^2|X| |Y|$ and $D\ge1$, so 
$$\phi(\mathcal X,\mathcal Y)>\phi(X,Y)+\epsilon^4|X| |Y|.$$

Otherwise, all pairs $(X_i,Y_j)$ have density at most $2D$.  
Let $d=d(X,Y)$ and $d_{ij}=d(X_i,Y_j)$.
Now $\sum_{i,j}d_{ij}|X_i| |Y_j|=d|X| |Y|$, so 
defining $\eta_{ij}$ by $d_{ij}=d+\eta_{ij}$, we have 
\begin{equation}\label{eta0}
\sum_{i,j}\eta_{ij}|X_i| |Y_j|=0.
\end{equation}
It follows that
\begin{align*}
\phi(\mathcal X,\mathcal Y)
&=\sum_{i,j}\phi(d_{ij})|X_i| |Y_j|\\
&=\sum_{i,j}(d+\eta_{ij})^2|X_i| |Y_j|\\
&=d^2|X| |Y|+2d\sum_{i,j}\eta_{ij}|X_i| |Y_j|+\sum_{i,j}\eta_{ij}^2|X_i| |Y_j|\\
&\ge d^2|X| |Y|+\eta_{11}^2|X_1| |Y_1|\\
&\ge\phi(X,Y)+\epsilon^4|X| |Y|,
\end{align*}
where we have used \eqref{eta0} in the penultimate line, and the fact that $|\eta_{11}|\ge\epsilon$ in the final line.
\end{proof}

The main lemma in our argument is as follows. 

\begin{lemma}\label{mainl}
Suppose that $\epsilon\in(0,1/2)$, $D\ge 8/\epsilon^2$, and  $A$ is a matrix with rows indexed by $V$ and columns indexed by $W$ such that $||A||=|V| |W|$.
Suppose that $\mathcal P=V_0\cup\cdots\cup V_k$ is a balanced partition of $V$ with exceptional set $V_0$,
and
$\mathcal Q=W_0\cup\cdots\cup W_l$ is a balanced partition of $W$ with exceptional set $W_0$, where
$$|V_0|<|V|/2, \qquad|W_0|<|W|/2$$
and 
$$|V|\ge k4^{l+1}, \qquad |W|\ge l4^{k+1}.$$ 
If the block partition $(\mathcal P, \mathcal Q)$ with exceptional sets $(V_0,W_0)$ is not $\epsilon$-regular then there is a balanced refinement 
$(\mathcal P',\mathcal Q')$ of $(\mathcal P,\mathcal Q)$ with excptional sets $(V_0',W_0')$, such that
\begin{align}
|\mathcal P'|&\le k4^{l+1}, \quad |\mathcal Q'|\le l4^{k+1}\notag\\
|V_0'|&\le|V_0|+\frac{|V|}{2^l},\quad |W_0'|\le|W_0|+\frac{|W|}{2^k}\label{exc}
\end{align}
and
\begin{equation}\label{globalinc}
\phi(\mathcal P',\mathcal Q')\ge\phi(\mathcal P,\mathcal Q)+\frac{\epsilon^5|V| |W|}{8}.
\end{equation}
\end{lemma}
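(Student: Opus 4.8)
The plan is to follow the standard index-pumping template, but carefully tracking the bookkeeping of exceptional sets so that the promised bounds in \eqref{exc} hold. First I would observe that since $(\mathcal P,\mathcal Q)$ is not $\epsilon$-regular and it is balanced with $|V_0|<\epsilon|V|\le|V|/2$, $|W_0|<|W|/2$ (wait — actually the hypothesis only gives $|V_0|<|V|/2$), the failure of regularity must come from having more than $\epsilon|\mathcal P||\mathcal Q| = \epsilon kl$ nonexceptional blocks $(V_i,W_j)$ that are not $\epsilon$-regular (the size conditions on $V_0,W_0$ are hypotheses, so they cannot be the cause). Fix that set $\mathcal I$ of irregular blocks, so $|\mathcal I|\ge\epsilon kl$.

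Next, for each irregular block $(V_i,W_j)\in\mathcal I$ I would like to invoke Lemma~\ref{smallepsgain} to get a partition of $V_i$ into two parts and of $W_j$ into two parts gaining $\epsilon^4|V_i||W_j|$ in $\phi$. But Lemma~\ref{smallepsgain} requires $|d(V_i,W_j)|\le\epsilon D$. For blocks where this fails, the density is large, $|d(V_i,W_j)|>\epsilon D\ge 8/\epsilon$, and one shows directly (using $\phi(t)\ge 4D(|t|-D)$ and the normalization) that such blocks already contribute a lot; in fact one can argue that the total $\|A\|$-mass on such blocks is small because $\phi(\mathcal P,\mathcal Q)\le\phi(\widetilde{\mathcal P},\widetilde{\mathcal Q})\le 4D\|A\| = 4D|V||W|$ by Lemma~\ref{phiupperbound}, which bounds how much mass can sit at high density. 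So either many irregular blocks have small density (and we pump using Lemma~\ref{smallepsgain}), or a constant fraction of the mass is on high-density blocks and $\phi(\mathcal P,\mathcal Q)$ is already within a constant factor of its maximum — but this needs care since we want an \emph{increase}, not just a large value. The cleanest route: throw the high-density irregular blocks' row/column parts into the exceptional sets (there are few such blocks since their mass is bounded), and apply Lemma~\ref{smallepsgain} only to the low-density irregular blocks, of which there must still be $\ge \tfrac12\epsilon kl$ many if $D$ is chosen large enough.

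Then I would take the common refinement: for each $i$, let $\mathcal P_i'$ be the partition of $V_i$ generated by all the two-set partitions arising from irregular blocks $(V_i,W_j)$ across $j$; this has at most $2^l$ parts, so $|\mathcal P'|\le k2^l \le k4^{l+1}$, and symmetrically for $\mathcal Q'$. By additivity of $\phi$ and Lemma~\ref{monotone} (refining within each block only helps), summing the per-block gains over the $\ge\tfrac12\epsilon kl$ low-density irregular blocks gives
\[
\phi(\mathcal P',\mathcal Q')\ge\phi(\mathcal P,\mathcal Q)+\sum_{(i,j)\ \mathrm{good}}\epsilon^4|V_i||W_j|\ge\phi(\mathcal P,\mathcal Q)+\tfrac12\epsilon kl\cdot\epsilon^4\cdot\frac{|V|}{2k}\cdot\frac{|W|}{2l},
\]
using $|V_i|\ge|V|/(2k)$ etc. from balancedness and $|V_0|<|V|/2$; this is $\ge\phi(\mathcal P,\mathcal Q)+\epsilon^5|V||W|/8$, matching \eqref{globalinc}. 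Finally I must re-balance $\mathcal P'$ and $\mathcal Q'$: split each nonexceptional part into pieces of a common size and dump the remainders into $V_0',W_0'$; the number of remainder vertices per original class $V_i$ is at most the number of parts of $\mathcal P_i'$, i.e.\ at most $2^l$, and with $\le k$ classes one loses at most $k2^l\le |V|/2^l$ vertices given $|V|\ge k4^{l+1}$, yielding $|V_0'|\le|V_0|+|V|/2^l$; the high-density blocks contributed at most $O(1/(\epsilon D))$-fraction, absorbed into the same slack by taking $D$ large.

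The main obstacle is the high-density blocks: one needs the $\phi(t)\ge 4D(|t|-D)$ lower bound together with the normalization $\|A\|=|V||W|$ and Lemma~\ref{phiupperbound} to argue that the number (or mass) of irregular blocks with $|d(V_i,W_j)|>\epsilon D$ is small enough — small enough both to leave $\ge\tfrac12\epsilon kl$ good blocks and to be absorbable into the exceptional-set budget in \eqref{exc} — and getting the constants to line up (the choice $D\ge 8/\epsilon^2$) is exactly what makes this work. Everything else is routine Jensen/additivity and careful counting.
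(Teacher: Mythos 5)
Your skeleton matches the paper's proof (count the irregular blocks, discard the high-density ones, apply Lemma~\ref{smallepsgain} to the rest, pass to the generated refinements, then rebalance), but your treatment of the blocks with $|d(V_i,W_j)|\ge\epsilon D$ has a genuine flaw. Your ``cleanest route'' moves the row and column classes of every high-density irregular block into the exceptional sets. Each nonexceptional class has size at least $|V|/2k$ (resp.\ $|W|/2l$), while the budget in \eqref{exc} is only $|V|/2^l$ (resp.\ $|W|/2^k$); so as soon as a single such block exists and $2^l>2k$, this step already violates \eqref{exc}. The proposed repair ``absorbed into the same slack by taking $D$ large'' is not available: $D$ is a given parameter of the lemma (and equals exactly $8/\epsilon^2$ when the lemma is applied in the proof of Theorem~\ref{srlm}), and in any case enlarging $D$ does not shrink a class relative to the $|V|/2^l$ budget. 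The fix --- which is what the paper does --- is that nothing at all needs to be done to high-density blocks: give them the trivial partitions and refine only the low-density irregular blocks. The count you left as ``if $D$ is chosen large enough'' is immediate from the normalization alone: a block with $|d(V_i,W_j)|\ge\epsilon D$ carries mass at least $\epsilon D\cdot\frac{|V|}{2k}\cdot\frac{|W|}{2l}$, so more than $\epsilon kl/2$ such blocks would force $||A||>\frac{\epsilon^2 D}{8}|V|\,|W|\ge|V|\,|W|$, a contradiction; hence at least $\epsilon kl/2$ irregular blocks have $|d(V_i,W_j)|<\epsilon D$, and your gain computation then yields \eqref{globalinc}. (Your detour through $\phi(t)\ge 4D(|t|-D)$ and Lemma~\ref{phiupperbound} is not needed for this.)

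There is also a slip in the rebalancing bookkeeping: splitting a part into pieces of a common size $s$ leaves up to $s-1$ leftover vertices per part, not $1$, so ``remainder per class $\le$ number of parts of $\mathcal P_i'$'' is false as stated. The standard choice $s=\lfloor |V|/(k4^l)\rfloor$ makes the numbers work: each class $V_i$ is cut by the generated partition into at most $2^l$ parts, each losing fewer than $|V|/(k4^l)$ vertices, for a total of at most $k2^l\cdot|V|/(k4^l)=|V|/2^l$ vertices added to $V_0'$, and the number of nonexceptional parts after splitting is at most $|V|/\lfloor|V|/(k4^l)\rfloor\le k4^{l+1}$ (your bound $|\mathcal P'|\le k2^l$ refers to the unbalanced refinement and no longer applies once the parts are re-cut to common size). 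With these two repairs your argument coincides with the paper's proof.
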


\begin{proof}
Let $n=|V|$ and $m=|W|$.
Note first that, for $i,j\ge1$, we have $|V_i|\ge n/2k$ and $|W_l|\ge m/2l$.
Since $(\mathcal P,\mathcal Q)$ is not $\epsilon$-regular, there are at least $\epsilon kl$ blocks $(V_i,W_j)$ with $i,j\ge1$ such that $(V_i,W_j)$ is not $\epsilon$-regular.  Now at most $\epsilon kl/2$ of these blocks have $|d(V_i,W_j)|\ge \epsilon D$, or else we would have
$$||A||>(\epsilon kl/2)(n/2k)(m/2l)\epsilon D=\epsilon^2 mnD/8\ge mn.$$
So there are at least $\epsilon kl/2$ irregular blocks with $|d(V_i,W_j)|<\epsilon D$.

For each pair $(i,j)$, $i,j\ge1$, we define partitions $\mathcal C_{ij}$ of $V_i$ and $\mathcal D_{ij}$ of $W_j$ as follows.
\begin{itemize}
\item  If $(V_i,W_j)$ is $\epsilon$-regular or $|d(V_i,W_j)|\ge \epsilon D$, we take the trivial partitions $\mathcal C_{ij}=\{V_i\}$ and $\mathcal D_{ij}=\{W_j\}$.
\item  If $(V_i,W_j)$ is not $\epsilon$-regular and $|d(V_i,W_j)|<\epsilon D$, we can find partitions of $\mathcal C_{ij}$ of $V_i$ and $\mathcal D_{ij}$ of $W_j$ into two sets each, as in Theorem \ref{smallepsgain}, so that \eqref{gain} is satisfied.
\end{itemize}
Now for $i,j\ge1$, we let $\mathcal P_i$ be the partition of $V_i$ generated by $\{\mathcal C_{ij}:j\ne i\}$
and $\mathcal Q_j$ be the partition of $W_j$ generated by $\{\mathcal D_{ij}:i\ne j\}$.  
Let $\mathcal P_0$ be the partition of $V_0$ into singletons
and let $\mathcal Q_0$ be the partition of $W_0$ into singletons.  
Let ${\mathcal P^*}$ be the partition of $V$ obtained by concatenating the $\mathcal P_i$, $i\ge 0$, 
and let ${\mathcal Q^*}$ be the partition of $W$ obtained by concatenating the $\mathcal Q_i$, $i\ge 0$.  Then
$$\phi({\mathcal P^*},{\mathcal Q^*})
=\phi(\mathcal P_0,\mathcal Q_0)
+\sum_{i\ge 1}\phi(\mathcal P_0,\mathcal Q_i)
+\sum_{i\ge 1}\phi(\mathcal P_i,\mathcal Q_0)
+\sum_{i,j\ge1}\phi(\mathcal P_i,\mathcal Q_j).$$
On the other hand, recalling \eqref{phiexcept}, we have
$$\phi(\mathcal P,\mathcal Q)=\phi(\mathcal P_0,\mathcal Q_0)
+\sum_{i\ge 1}\phi(\mathcal P_0,W_i)
+\sum_{i\ge 1}\phi(V_i,\mathcal Q_0)
+\sum_{i,j\ge1}\phi(V_i,W_j).$$
Now for all $i,j\ge1$,
\begin{align*}
\phi(\mathcal P_0,\mathcal Q_j)\ge \phi(\mathcal P_0,W_j),\\
\phi(\mathcal P_i,\mathcal Q_0)\ge \phi(V_i,\mathcal Q_0),\\
\phi(\mathcal P_i,\mathcal Q_j)\ge \phi(V_i,W_j),
\end{align*}
and, in $\epsilon kl/2$ cases,
\begin{equation}\label{ekl}
\phi(\mathcal P_i,\mathcal Q_j)\ge \phi(V_i,W_j)+\epsilon^4|V_i| |W_j|,
\end{equation}
where the first inequality follows from Lemma \ref{monotone} and the second one from \eqref{gain}.
Since $|V_i| |W_j|\ge mn/4kl$, we have
\begin{align*}
\phi(\mathcal P^*,\mathcal Q^*)
&\ge\phi({\mathcal P},{\mathcal Q})
+\frac{\epsilon kl}{2}\cdot\epsilon^4\cdot\frac{mn}{4kl}\\
&=\phi(\mathcal P,\mathcal Q)+\frac{\epsilon^5mn}{8}.
\end{align*}

We are almost done, except the partitions $\mathcal P^*$ and $\mathcal Q^*$ may not be balanced.  
We start with exceptional set $V_0$, which contains all sets from $\mathcal P_0$, and
divide each remaining set in $\mathcal P^*$ into subsets of size $\lfloor n/k4^l\rfloor$, throwing away (i.e.~adding to the exceptional set) any remainder.  Since each nonexceptional set in $\mathcal P$ is divided into at most $2^l$ pieces in $\mathcal P^*$, and we throw away at most $n/k4^l$ vertices from each set in $\mathcal P^*$, the exceptional set increases in size by at most
$$\frac{n}{k4^l}\cdot k2^l=\frac{n}{2^l}$$
vertices.  Let the resulting partition be $\mathcal P'$, and let $V_0'$ be the resulting exceptional set.  
Note that $|\mathcal P'|\le n/\lfloor n/k4^l\rfloor\le k4^{l+1}$.

We similarly divide $\mathcal Q^*$ into pieces of size $m/l4^k$, increasing the size of the exceptional set by at most $m/2^k$.  The resulting block partition $(\mathcal P',\mathcal Q')$ with exceptional sets $(V_0',W_0')$ is a refinement of $(\mathcal P,\mathcal Q)$ (with exceptional sets $(V_0,W_0)$), and still satisfies \eqref{globalinc} by Lemma \ref{monotone}. 
\end{proof}

\subsection{Final proofs}
After all this, it is straightforward to prove our main theorems.  

\begin{proof}[Proof of Theorem \ref{srlm}]
We may assume that $||A||=|V| |W|$, or else replace $A$ by the normalized matrix $A^*$.  Let $D=8/\epsilon^2$, so by Lemma \ref{phiupperbound} we have $\phi(\mathcal P',\mathcal Q')\le32|V| |W|/\epsilon^2$ for any block partition $(\mathcal P,\mathcal Q)$ of $A$.

We start with any balanced block partition $(\mathcal P,\mathcal Q)$ with $|\mathcal P|=|\mathcal Q|=\max\{L,\lceil\log(1/\epsilon)\rceil+2\}$, 
and with exceptional sets $(V_0,W_0)$ such that $|V_0|,|W_0|<L$.  We now apply Lemma \ref{mainl} repeatedly: at each iteration, we either have an $\epsilon$-regular partition, or can apply the lemma to obtain a partition for which $\phi$ increases by at least $\epsilon^5|V| |W|/8$ and such that the increase in size of the exceptional sets is controlled by \eqref{exc}.  Since $\phi(\mathcal P',\mathcal Q')$ is always at most $32|V| |W|/\epsilon^2$, we halt after at most $256/\epsilon^7$ steps with an $\epsilon$-regular partition.
\end{proof}

\begin{proof}[Proof of Theorem \ref{srlsm}]
We proceed as in the proof of Theorem \ref{srlm}.  However, we begin by choosing a balanced block partition $(\mathcal P,\mathcal Q)$ with $|\mathcal P|=|\mathcal Q|=4L/\epsilon$, so that there are at most $\epsilon|\mathcal P| |\mathcal Q|/4$ diagonal blocks.

We now apply a slight modification of Lemma \ref{mainl}: in order to keep a symmetric partition, we choose $\mathcal C_{ij}=\mathcal D_{ji}$ at every stage.   
For $i=j$, we choose the trivial partition $\mathcal C_{ii}=\mathcal D_{ii}=\{V_i\}$.
For $i\ne j$, we choose partitions for $(V_i,V_j)$ and $(V_j,V_i)$ at the same time, and choose the same partition up to transposition for each: note that if
$A$ is not symmetric and both $(V_i,V_j)$ and $(V_j,V_i)$ are irregular, then we may have to choose partitions such that \eqref{ekl} holds 
for only one of $(V_i,V_j)$ and $(V_j,V_i)$.
However, since there are at most $\epsilon kl/4$ diagonal blocks, 
there are at least 
$\epsilon kl/4$ irregular off-diagonal blocks, and so
equation \eqref{ekl} still applies at least $\epsilon kl/8$ times.  Therefore $\phi(\mathcal P,\mathcal Q)$ increases by at least $\epsilon^5mn/32$ at each step.
\end{proof}

Note that, in the case that $A$ is the adjacency matrix of a graph $G$, if a diagonal block is $\epsilon$-regular we obtain some pseudorandomness inside 
the corresponding class of the partition; however, we are allowed to fail on a proportion $\epsilon$ of blocks, 
so we can fail to be $\epsilon$-regular on all diagonal blocks.

\section{Final remarks}\label{s4}

Although Theorem \ref{srlsg} seems to give a natural version of Szemer\'edi's Regularity Lemma for sparse graphs, significant difficulties remain in applications.  
For instance, the usual counting and embedding lemmas that are invaluable in the dense case are not immediately useful in the sparse case.  Furthermore, as edge weights may be unbounded 
(after rescaling), Theorem \ref{srlsg} may give a partition in which all the edges of the original graph are hidden in irregular pairs. However, 
even in this case it might be helpful to know that the edges are confined to some small part of the graph.

The results above are stated for graphs and matrices.  It is straightforward to write down versions for digraphs, for coloured graphs, 
or for several matrices with arbitrary real entries.  For instance the proof of Theorem \ref{srlm} is easily modified to prove the following 
(by applying Lemma \ref{mainl} to each matrix in turn at each stage, and refining a common block partition).

\begin{theorem}\label{srlk}
For every $\epsilon\in(0,1)$ and positive integers $L,k$ there is a positive integer $M$ such that for all $m,n\ge M$ and
for every sequence $A^{(1)},\ldots,A^{(k)}$ of $m$ by $n$ matrices with nonnegative entries
there is a  block partition $(\mathcal P,\mathcal Q)$ with $|\mathcal P|,|\mathcal Q|\in [L,M]$
that is simultaneously $(\epsilon)$-regular for each $A_i$.
\end{theorem}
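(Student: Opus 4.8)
The plan is to imitate the proof of Theorem~\ref{srlm}, running the iteration against a single potential that simultaneously records all $k$ matrices. We may assume $\epsilon<1/2$, since a $(1/2)$-regular partition is $(\epsilon)$-regular for every larger $\epsilon$; and, replacing each $A^{(t)}$ by its normalization, we may assume $\|A^{(t)}\|=mn$ for all $t$ (a zero matrix imposes no constraint and may be discarded), so that $(\epsilon)$-regularity for $A^{(t)}$ now means precisely $\epsilon$-regularity for $A^{(t)}$. Fix $D=8/\epsilon^{2}$ and, for each $t$, write $\phi^{(t)}$ for the convex cutoff function $\phi_{\epsilon,D}$ (defined just before Lemma~\ref{phiupperbound}) evaluated on the entries of $A^{(t)}$, and set
\[
\Phi(\mathcal P,\mathcal Q)=\sum_{t=1}^{k}\phi^{(t)}(\mathcal P,\mathcal Q).
\]
By Lemma~\ref{phiupperbound}, $0\le\Phi(\mathcal P,\mathcal Q)\le 4D\,k\,mn=32k\,mn/\epsilon^{2}$ for every block partition $(\mathcal P,\mathcal Q)$, with or without exceptional sets, of the common index sets.

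First, exactly as in the proof of Theorem~\ref{srlm}, I would start with a balanced block partition $(\mathcal P,\mathcal Q)$ with $|\mathcal P|=|\mathcal Q|=\max\{L,\lceil\log(1/\epsilon)\rceil+2\}$ and exceptional sets $(V_{0},W_{0})$ satisfying $|V_{0}|,|W_{0}|<L$, and then iterate. If the current partition is $(\epsilon)$-regular for every $A^{(t)}$ we stop; otherwise pick an index $t$ for which it is not $\epsilon$-regular with respect to $A^{(t)}$ and apply Lemma~\ref{mainl} to $A^{(t)}$. This produces a balanced refinement $(\mathcal P',\mathcal Q')$ obeying the class-size and exceptional-set bounds \eqref{exc} and with $\phi^{(t)}(\mathcal P',\mathcal Q')\ge\phi^{(t)}(\mathcal P,\mathcal Q)+\epsilon^{5}mn/8$. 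The crucial point is that refining $(\mathcal P,\mathcal Q)$ to $(\mathcal P',\mathcal Q')$ cannot decrease $\phi^{(s)}$ for any other index $s$, by Lemma~\ref{monotone}; hence
\[
\Phi(\mathcal P',\mathcal Q')\ge\Phi(\mathcal P,\mathcal Q)+\frac{\epsilon^{5}mn}{8}.
\]
Since $\Phi$ never exceeds $32k\,mn/\epsilon^{2}$, the iteration halts after at most $256k/\epsilon^{7}$ steps, necessarily at a partition that is $(\epsilon)$-regular for all of $A^{(1)},\dots,A^{(k)}$.

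It remains to verify the stated size constraints, which is the same bookkeeping as for Theorem~\ref{srlm}. The bound $256k/\epsilon^{7}$ on the number of steps depends only on $\epsilon$ and $k$, and each step enlarges the class counts only in the tower-like manner $|\mathcal P|\mapsto|\mathcal P|\,4^{|\mathcal Q|+1}$, $|\mathcal Q|\mapsto|\mathcal Q|\,4^{|\mathcal P|+1}$, so after the process terminates $|\mathcal P|$ and $|\mathcal Q|$ are bounded by some $M=M(\epsilon,k,L)$, while they never drop below their common starting value, which is at least $L$. Moreover both class counts strictly increase at every step, so writing $l_{0},l_{1},\dots$ for the successive values of $|\mathcal Q|$ we get $\sum_{i}2^{-l_{i}}\le 2^{1-l_{0}}\le\epsilon/2$, whence by \eqref{exc} the exceptional set of $V$ ends with fewer than $L+\epsilon m/2$ elements; symmetrically, that of $W$ ends with fewer than $L+\epsilon n/2$ elements. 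Choosing $M$ large enough that these stay below $\epsilon m$ and $\epsilon n$ respectively (and below $m/2$, $n/2$, and that $m,n$ always dominate the quantities $k\,4^{l+1}$, $l\,4^{k+1}$, so that Lemma~\ref{mainl} applies at every step) finishes the argument.

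The only idea not already present in the single-matrix proof is the use of the aggregate potential $\Phi$: because $\Phi$ is the sum of the $\phi^{(t)}$ and each $\phi^{(t)}$ is monotone under refinement (Lemma~\ref{monotone}), a refinement chosen to make a definite gain on one matrix never erases the gains previously accumulated on the others, so $\Phi$ rises by a fixed amount at each step no matter which matrix is currently irregular. I expect no genuinely new difficulty here; everything else is the routine adaptation of the proof of Theorem~\ref{srlm}, with the potential, and hence the length of the iteration, scaled up by the factor $k$.
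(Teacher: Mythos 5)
Your proposal is correct and follows essentially the same route as the paper, which proves Theorem~\ref{srlk} by iterating Lemma~\ref{mainl} on the matrices and using the monotonicity of $\phi$ under refinement (Lemma~\ref{monotone}) so that progress on one matrix is never undone; your aggregate potential $\Phi=\sum_t\phi^{(t)}$, bounded by $4Dk\,mn$ via Lemma~\ref{phiupperbound}, is just a clean way of packaging the same bookkeeping, with the paper refining for all irregular matrices at each stage rather than one per step. The remaining details (class counts at least $L$, exceptional sets staying below $\epsilon m$ and $\epsilon n$ via the geometric sum from the starting size $\lceil\log(1/\epsilon)\rceil+2$) are handled exactly as in the proof of Theorem~\ref{srlm}.
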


Once again, in the symmetric case, we can demand $\mathcal P=\mathcal Q$.

Finally, we note that taking $D=\infty$ in the arguments above gives a particularly clean version of the usual proof of Szemer\'edi's Regularity Lemma.

\medskip

\noindent{\bf Acknowledgement.}  The author would like to thank Paul Balister for very helpful suggestions, and a referee for pointing out the paper of \L uczak \cite{L00}.


\begin{thebibliography}{9}

\bibitem{BR}
B.~Bollob\'as and O.~Riordan, Metrics for sparse graphs, {\em in} Surveys in combinatorics 2009, 
211--287, London Math.~Soc.~Lecture Note Ser.~{\bf 365}, Cambridge Univ. Press, Cambridge, 2005.

\bibitem{GS}
S.~Gerke and A.~Steger, The sparse regularity lemma and its applications, 
{\em in} Surveys in combinatorics 2005, 227--258, London Math.~Soc.~Lecture Note Ser.~{\bf 327}, 
Cambridge Univ. Press, Cambridge, 2005.

\bibitem{K}
Y.~Kohayakawa, Szemer\'edi's regularity lemma for sparse graphs, {\em in} 
Foundations of Computational Mathematics, Rio de Janeiro, 1997, Springer, Berlin, 1997, pp. 216--230.

\bibitem{KR}
Y.~Kohayakawa, V.~R\"odl, Szemer\'edi's regularity lemma and quasirandomness, 
{\em in} Recent Advances in Algorithms and Combinatorics, CMS Books Math./Ouvrages Math. SMC 11, Springer, New York (2003), 289--351.

\bibitem{KS96}
J.~Koml\'os and M.~Simonovits,
Szemer\'edi's regularity lemma and its applications in graph theory,
{\em in} Combinatorics, Paul Erdős is eighty, Vol.~2 (Keszthely, 1993), 295--352, 
Bolyai Soc.~Math.~Stud. {\bf 2}, J\'anos Bolyai Math.~Soc., Budapest, 1996.

\bibitem{L00}
T.~\L uczak, 
On triangle-free random graphs,
{\em Random Structures and Algorithms} {\bf 16} (2000), 260--276.

\bibitem{S}
E.~Szemer\'edi, Regular partitions of graphs, {\em in} Probl\`emes combinatoires et th\'eorie des graphes, 
Colloq. Internat. CNRS, Univ. Orsay, Orsay, 1976, CNRS, Paris, 1978, pp. 399--401.

\end{thebibliography}
\end{document}